\documentclass[final,leqno,letterpaper]{etna}

\usepackage{graphicx}
\usepackage{subfigure}
\usepackage{amsmath}
\usepackage{amsfonts}
\usepackage{hyperref,microtype}
\usepackage{placeins}

\usepackage{lineno}

\newtheorem{thm}{Theorem}[section]
\newtheorem{cor}[thm]{Corollary}
\newtheorem{lem}[thm]{Lemma}

\setbibdata{1}{xx}{46}{2023} 

\hypersetup{%
    pdftitle={Convergence Analysis of a Krylov Subspace Spectral Method for the 1-D Wave Equation in an Inhomogeneous Medium},
    pdfauthor={Bailey Rester, James V. Lambers},
    pdfkeywords={spectral methods, wave equation, convergence analysis, variable coefficients}
    }

\title{Convergence Analysis of a Krylov Subspace Spectral Method for the 1-D Wave Equation in an Inhomogeneous Medium\thanks{%
Received... Accepted... Published online on... Recommended by....
}}

\author{Bailey Rester\footnotemark[2]
\and Anzhelika Vasilyeva\footnotemark[2]
        \and James V. Lambers\footnotemark[2]}

\shorttitle{Convergence Analysis of KSS for the 1-D Wave Equation} 
\shortauthor{B.~RESTER ET AL.}

%
%

\begin{document}

\maketitle

\renewcommand{\thefootnote}{\fnsymbol{footnote}}

\footnotetext[2]{School of Mathematics and Natural Sciences, The University of Southern Mississippi, 118 College Dr \#5043, Hattiesburg, MS 39406 USA}

\begin{abstract}
This paper presents a convergence analysis of a Krylov subspace spectral (KSS) method applied to a 1-D wave equation in an inhomogeneous medium.  It will be shown that for sufficiently
regular initial data, this KSS method yields unconditional stability,
spectral accuracy in space, and second-order accuracy in time,
in the case of constant wave speed and a bandlimited reaction term coefficient.
Numerical experiments that corroborate the established theory are included, along with an investigation of generalizations, such as to higher space dimensions and nonlinear PDEs, that features performance
comparisons with other Krylov subspace-based time-stepping methods.
This paper also includes the first stability analysis of a KSS method that does not assume 
a bandlimited reaction term coefficient.
\end{abstract}

\begin{keywords}
spectral methods, wave equation, convergence analysis, variable coefficients
\end{keywords}

\begin{AMS}
65M70, 65M12, 65F60
\end{AMS}

\section{Introduction}


Consider the 1-D wave equation in an inhomogeneous medium,
\begin{equation} \label{eq:introPDE}
u_{tt}=(p(x)u_x)_x+q(x)u,
\end{equation}
on a bounded domain, with appropriate initial and boundary conditions.
%
Analytical methods are not practical to use for this problem, since the coefficients are not constant.  For instance, applying separation of variables  \cite{517txtbook} would result in a spatial ODE that cannot be solved analytically, and therefore numerical methods are needed.
However, standard time-stepping methods, such as Runge-Kutta methods or multistep methods, suffer from a lack of scalability. As the number of grid points increases, a smaller time step would be needed due to the CFL condition \cite{pdetxt} for explicit methods, or an increasingly ill-conditioned system must be solved for implicit methods. It follows that increasing the number of grid points significantly increases the computational expense. Therefore, a more practical numerical method for solving this kind of variable-coefficient PDE is desirable.

Krylov subspace spectral (KSS) methods are high-order accurate, explicit time-stepping methods that  possess stability characteristic of implicit methods \cite{paper2}. By contrast with other time-stepping methods, KSS methods employ a componentwise approach, in which each Fourier coefficient of the solution is computed using an approximation of the solution operator of the PDE that is tailored to that coefficient.  This customization is based on techniques for approximating bilinear forms involving matrix functions by treating them as Riemann-Stieltjes integrals \cite{mmq}. This componentwise approach allows KSS methods to circumvent difficulties caused by stiffness, and thus scale effectively to higher spatial resolution \cite{kssepi}.

A first-order KSS method applied to the heat equation with a constant leading coefficient was proven to be unconditionally stable \cite{paper22, paper18}, as well as a second-order KSS method applied to the wave equation with a constant leading coefficient \cite{paper17}.  In all of these studies, lower-order coefficients of the spatial differential operator were assumed to be bandlimited. A first-order KSS method applied to the heat equation with a bandlimited leading coefficient is also unconditionally stable \cite{paper2}. In this paper, we analyze stability of a KSS method applied to the wave equation with bandlimited coefficients. 

The outline of the paper is as follows. Section 2 provides an overview of KSS methods, as applied
to the wave equation. Section 3 presents a stability analysis of a second-order KSS method applied to the PDE (\ref{eq:introPDE}) with bandlimited coefficients $p(x)$ and $q(x)$, and periodic boundary conditions. In that same section, a full convergence analysis in the case of $p(x)\equiv \mathrm{constant}$ is carried out.  Corroborating numerical experiments are given in Section 4, along with application of 
the second-order KSS method to more general problems.  
This section also includes performance comparisons between the KSS method and other
time-stepping methods, particularly those that also make use of Krylov subspaces.
Upon demonstrating through numerical experiments 
that the assumptions on the coefficients of the PDE made in Section
3 are not necessary for convergence, an additional stability analysis is conducted in which
$q(x)$ is not assumed to be bandlimited, which has not previously been performed on a KSS method.
Conclusions and ideas for future work are given in Section 5.

%

\section{Background}

Consider the second-order wave equation 
\begin{equation} \label{pde}
u_{tt}+Lu=0 \textnormal{  on  } (0,2\pi)\times(0,\infty),
\end{equation} 
\begin{equation} \label{ICs}
u(x,0)=f(x), \hspace{1cm} u_{t}(x,0)=g(x), \hspace{1cm} 0<x<2\pi,
\end{equation} 
with periodic boundary conditions 
\begin{equation} \label{BCs}
u(0,t)=u(2\pi,t), \hspace{1cm} t>0.
\end{equation}
The spatial differential operator $L$ is defined by 
\begin{equation} \label{eqLform}
Lu=-(p(x)u_x)_x+q(x)u,
\end{equation}
where we assume $p(x)>0$ and $q(x)\geq0$, to guarantee that $L$ is self-adjoint and positive definite. 

A spectral representation of the operator $L$ allows us to describe the solution operator, the propagator,
as a function of $L$ \cite{glk}. 
By introducing
\begin{eqnarray} 
f_{11}(\lambda)&=&f_{22}(\lambda)=\cos(\lambda^{1/2} \Delta t), \label{eqf11} \\
f_{12}(\lambda)&=&\lambda^{-1/2}\sin(\lambda^{1/2}\Delta t), \label{eqf12} \\
f_{21}(\lambda) &=& -\lambda f_{12}(\lambda), \label{eqf21}
\end{eqnarray}
%
we can describe the evolution of the solution by
$$
\left[ \begin{array}{c} u(x,t+\Delta t) \\ u_t(x,t+\Delta t) \end{array}\right]=\left[ \begin{array}{cc} f_{11}(L) &f_{12}(L)\\ f_{21}(L) &f_{22}(L) \end{array}\right]\left[ \begin{array}{c} u(x,t) \\ u_t(x,t) \end{array}\right].
$$
In view of the periodic boundary conditions, we can express the solution at time $t+\Delta t$
as a sum of Fourier series,
\begin{eqnarray*}
u(x,t+\Delta t) &=& \frac{1}{2\pi} \sum_{\omega=-\infty}^\infty e^{i\omega x} \langle e^{i\omega\cdot},
\cos(L^{1/2}\Delta t) u(\cdot,t) \rangle + \\
&& \frac{1}{2\pi} \sum_{\omega=-\infty}^\infty e^{i\omega x} \langle e^{i\omega\cdot},
L^{-1/2}\sin(L^{1/2}\Delta t) u_t(\cdot,t) \rangle,
\end{eqnarray*}
where $\langle \cdot, \cdot \rangle$ is the standard inner product of functions on $(0,2\pi)$.

Upon spatial discretization, each Fourier coefficient in the above series is approximated by
an expression of the form
\begin{equation} \label{eq:ufAv}
\mathbf{u}^Hf(A)\mathbf{v},
\end{equation}
where ${\bf u}$ and ${\bf v}$ are $N$-vectors, $A$ is an $N\times N$ symmetric positive definite matrix, 
and $f$ is either $f_{11}$ or $f_{12}$.  In \cite{mmq}, Golub and Meurant describe algorithms
for approximating such bilinear forms involving matrix functions, by treating them as Riemann-Stieltjes
integrals that can be approximated through Gauss quadrature over an interval containing the eigenvalues
of $A$.

In the case of ${\bf u}={\bf v}$, the Gauss quadrature rule is constructed by applying the Lanczos algorithm to $A$, with initial vector ${\bf u}$.  The Gauss quadrature nodes and weights are then obtained from
the eigenvalues and eigenvectors of the tridiagonal matrix of recursion coefficients produced by the
Lanczos iteration.  In the case of ${\bf u}\neq{\bf v}$, the unsymmetric Lanczos algorithm can be used
instead, with initial vectors ${\bf u}$ and ${\bf v}$, but this may yield a quadrature rule that does not have
real positive weights, which can be numerically unstable \cite{Atk}.

For this case, a block approach can be used instead \cite{mmq}.  In this case, the block Lanczos
algorithm \cite{gu} is applied to $A$, with initial block 
$\left[ \begin{array}{cc} {\bf u} & {\bf v} \end{array} \right]$.  The iteration produces a block tridiagonal
matrix, with $2\times 2$ blocks, and as before, its eigenvalues and eigenvectors yield Gauss
quadrature nodes and (matrix-valued) weights.


We now describe how this approach is applied to the solution of the problem
(\ref{pde}), (\ref{ICs}), (\ref{BCs}).
Let ${\bf u}^n$ and ${\bf u}_t^n$ be the computed solution at time $t_n$ and its
time derivative, respectively, and let $\hat{\bf e}_\omega$ be a discretization of
$\hat{e}_\omega(x)=e^{i\omega x}$.
For each wave number
$\omega = -N/2+1,\ldots, N/2$, we define 
$$R_0 = \left[ \begin{array}{cc} \frac{1}{N}\hat{\bf e}_\omega & {\bf u}^n \end{array} \right],
\quad \tilde{R}_0 = \left[ \begin{array}{cc} \frac{1}{N}\hat{\bf e}_\omega & {\bf u}_t^n \end{array} \right],$$
and then compute the $QR$ factorizations
$$R_0 = X_1B_0, \quad \tilde{R}_0 = \tilde{X}_1 \tilde{B}_0.$$
Block Lanczos iteration yields ${\cal T}_K$ and $\tilde{\cal T}_K$ from $X_1$ and $\tilde{X}_1$.
Then, the Fourier coefficients of the solution and its time derivative are approximated by
$$[\hat{\bf u}^{n+1}]_\omega = \left[ B_0^H \cos[{\cal T}_K^{1/2}\Delta t]_{1:2,1:2} B_0 \right]_{12} + \left[ \tilde{B}_0^H (\tilde{\cal T}_K^{-1/2} \sin[\tilde{\cal T}_K^{1/2}\Delta t])_{1:2,1:2} \tilde{B}_0 \right]_{12},$$
$$[\hat{\bf u}_t^{n+1}]_\omega = -\left[ B_0^H ({\cal T}_K^{1/2}\sin[{\cal T}_K^{1/2}\Delta t])_{1:2,1:2} B_0 \right]_{12} + \left[ \tilde{B}_0^H \cos[\tilde{\cal T}_K^{1/2}\Delta t]_{1:2,1:2} \tilde{B}_0 \right]_{12}.$$

Let $u(x,\Delta t)$ be the exact solution,
and let $\tilde{u}(x,\Delta t)$ be the approximate solution.  If $K$ iterations of block Lanczos are performed, then, for $\omega=-N/2+1,\ldots,N/2$,  \cite{paper17}
$$
| \langle \hat{ e}_\omega, u(\cdot,\Delta t) - \tilde{u}(\cdot,\Delta t) \rangle | = O(\Delta t^{4K}),
$$
$$
| \langle \hat{ e}_\omega, u_t(\cdot,\Delta t) - \tilde{u}_t(\cdot,\Delta t) \rangle | = O(\Delta t^{4K-1}).  
$$
The high order of accuracy in time is due to the second derivative with respect to time in the PDE.
In addition to their high-order accuracy in time, 
the following has been proven about the stability of KSS methods, for various problems:
\begin{itemize}
\item Heat equation $u_t=pu_{xx}+q(x)u$, where $p$ is constant, $q(x)$ is bandlimited: a first-order KSS method is unconditionally stable \cite{paper18},
\item Wave equation $u_{tt}=pu_{xx}+q(x)u$, where $p$ is constant, $q(x)$ is bandlimited: a second-order KSS method is unconditionally stable \cite{paper17},
\item Reaction-diffusion system of the form ${\bf v}_t=L{\bf v}$: a first-order KSS method with constant diffusion coefficient and bandlimited reaction term coefficient is unconditionally stable \cite{paper2},

\item Wave equation $u_{tt}=pu_{xx}+q(x)u$, where $p$ is constant, $q(x)$ is bandlimited: a second-order block KSS method is unconditionally stable \cite{paper17}, and
\item Heat equation $u_t=(p(x)u_x)_x+q(x)u$ where $p(x)$ and $q(x)$ are bandlimited: a first-order block KSS method is unconditionally stable \cite{paper2}.
\end{itemize}

KSS methods use a significantly different approach to computing matrix function-vector products
of the form $\varphi(A){\bf b}$ than Krylov subspace methods from the literature (see,
for example, \cite{hochlubsel}).  Such Krylov subspace methods approximate the function $\varphi$
with either a polynomial or rational function. Depending on the function $\varphi$, the approximating function may need to be of high degree to ensure sufficient accuracy.  When such methods are used
to solve stiff systems of ODEs obtained from spatial discretization of PDEs, the degree can grow substantially when the time step or number of grid points increases.  

This is demonstrated in
\cite{kssepi}, where it was also shown that, by contrast, KSS methods do not suffer from this loss of
scalability.  Each Fourier coefficient of the solution is obtained using
its own frequency-dependent approximation, that is of a low degree determined by the desired order
of temporal accuracy.  This is possible because each Fourier coefficient is equivalent to a 
Riemann-Stieltjes integral with a frequency-dependent measure that is nearly constant over most
of the domain of integration \cite{kssepi}, and therefore the integral is determined primarily by
the behavior of the integrand over only a small, frequency-dependent portion of this domain.

In Section 4 it will be demonstrated that this component-wise approach to time-stepping provides
an advantage over other time-stepping methods, that apply the same approximation of the
exponential to all components of the solution.

%



\section{Convergence Analysis} \label{secconv}

We will now analyze convergence of a second-order KSS method, with $K=1$,
for the IVP (\ref{pde}), (\ref{ICs}), (\ref{BCs}), (\ref{eqLform}),
under the assumptions that the Fourier coefficients $\hat{p}(\omega)$, $\hat{q}(\omega)$ of
$p(x)$ and $q(x)$, respectively, satisfy $\hat p(\omega)=q(\omega)=0$ when $|\omega|>\omega_{\textnormal{max}}$ for some threshold $\omega_{\textnormal{max}}$.
That is, we assume that $p(x)$ and $q(x)$ are bandlimited.  

We first carry out spatial discretization.  We use a uniform grid, with spacing
$\Delta x = 2\pi/N$, where $N$ is assumed to be even.  
Then, we let ${\bf x}_N$ be an $N$-vector of grid points
$$x_j = j\Delta x,  \quad j=0,1,2,\ldots,N-1.$$
We denote by $\omega_j$ the corresponding wave numbers
$$\omega_j = j-N/2, \quad j=1,2,\ldots,N.$$ 
We then denote by $D_N$ an $N\times N$ matrix that discretizes the second derivative operator
using the discrete Fourier transform:
$$D_N = F_N^{-1} \Lambda_N F_N,$$
where
$$[F_N]_{jk} = \frac{1}{N} e^{-i\omega_j x_k}, \quad
[\Lambda_N]_{jj} = -\omega_j^2.$$
We also let $I_N$ denote the $N\times N$ identity matrix, whereas $I$ is the identity operator
on functions of $x$.

Let $u_1=u$ and $u_2=u_t$. We then rewrite (\ref{pde}) as the first-order system 
\begin{eqnarray}
\frac{\partial u_1}{\partial t} &=& u_2, \label{eqpdesys1} \\
\frac{\partial u_2}{\partial t} &=& -Lu_1, \label{eqpdesys2}
\end{eqnarray}
which, for convenience, we write as
\begin{equation} \label{eqpdesysall}
{\bf v}_t = \tilde{L}{\bf v}, \quad {\bf v}=\left[ \begin{array}{c} u_1 \\ u_2 \end{array} \right], \quad
\tilde{L} = \left[ \begin{array}{cc} 0 & I \\ -L & 0 \end{array} \right].
\end{equation}
Spatial discretization of (\ref{eqpdesysall}) yields a system of ODEs
\begin{equation} \label{eqwaveodesys}
{\bf v}_N'(t) = \tilde{L}_N {\bf v}(t),
\end{equation}
where 
$${\bf v}_N(t) = \left[ \begin{array}{c}
{\bf u}_{1,N}(t) \\
{\bf u}_{2,N}(t)
\end{array} \right]$$
is the spatial discretization of the vector field ${\bf v}$, and 
$\tilde{L}_N$ is a $2N\times 2N$ matrix which has the $2\times 2$ block structure
$$\tilde{L}_N = \left[ \begin{array}{cc} 0 & I_N \\ -L_N & 0 \end{array} \right].$$

We define the exact solution operator of (\ref{eqpdesys1}), (\ref{eqpdesys2}) as 
\begin{equation} 
S(t)=\exp[\tilde{L}t] = \left[ \begin{array}{cc} S_{11}(t) & S_{12}(t) \\ S_{12}(t) & S_{22}(t) \end{array}\right]=\left[ \begin{array}{cc} R_0(t) &R_1(t)\\ -L\, R_1(t)&R_0(t)\end{array}\right], \label{P} \end{equation} where, as before, $R_1(t) = L^{-1/2}\sin(L^{1/2} t)$ and $R_0(t) = \cos(L^{1/2} t)$.
Then we let  
\begin{equation} \label{eqSNdef}
S_N(\Delta t)=\left[ \begin{array}{cc} {S}_{N,11}(\Delta t) & {S}_{N,12}(\Delta t)\\ {S}_{N,12}(\Delta t) & {S}_{N,22}(\Delta t)\end{array}\right],
\end{equation} 
where each ${S}_{N,ij}(\Delta t)$ is the approximation of $S_{ij}(\Delta t)$ by the KSS method. 

A KSS method applied to (\ref{pde}) with $K=1$ uses two block Gauss quadrature nodes for each Fourier
coefficient.  Using an approach described in \cite{paper8}, we estimate these nodes, rather than using block Lanczos
iteration explicitly.  This significantly improves the efficiency of KSS methods, but it will
also simplify the convergence analysis to be carried out in this section.  The quadrature nodes will be prescribed as follows:
\begin{equation} \label{eqinterpw}
l_{1,\omega} = 0, \quad l_{2,\omega} = \overline{p}\omega^2 + \overline{q},
\quad \omega=-N/2+1,\ldots,N/2,
\end{equation}
where $\overline{p}$ and $\overline{q}$ are the average values of $p(x)$ and $q(x)$, respectively, on $[0,2\pi]$.

To interpolate the functions $f_{ij}(\lambda)$ from (\ref{eqf11}), (\ref{eqf12}), (\ref{eqf21})
at the nodes $l_{1,\omega}$, $l_{2,\omega}$, we compute the slopes
$$
M_{ij,\omega} = \frac{f_{ij}(l_{2,\omega})-f_{ij}(l_{1,\omega})}{l_{2,\omega}-l_{1,\omega}}, 
\quad \omega=-N/2+1,\ldots,N/2,
\quad i,j=1,2.
$$
We then describe the computed solution at time $t_{n+1}$ by
\begin{eqnarray}
{\bf u}^{n+1} & = & {\bf z}_{11}+ {\bf z}_{12}, \nonumber \\
{\bf u}_t^{n+1} & = & {\bf z}_{21} + {\bf z}_{22}, \label{eqzintro}
\end{eqnarray}
where 
$${\bf z}_{i1}={S}_{N,i1}(\Delta t){\bf u}^n, \quad 
{\bf z}_{i2}={S}_{N,i2}(\Delta t){\bf u}_t^n, \quad i=1,2.$$
We also define
$\tilde{p}=p-\bar{p}$, $\tilde{q}=q-\bar{q}$, and let
$P_N$, $Q_N$, $\tilde{P}_N$, and $\tilde{Q}_N$ be diagonal matrices with the values of the coefficients $p(x)$, $q(x)$,
$\tilde{p}(x)$, and $\tilde{q}(x)$, respectively, at the grid points on the main diagonal.  

Let $I_\omega = \{ k\in\mathbb{Z} | 0 < |k-\omega| \leq \omega_{\max} \}$.
The discrete Fourier coefficients of ${\bf z}_{ij}$, $i,j=1,2$, are then given by
\begin{eqnarray}
\hat{\bf z}_{11}(\omega)&=&S_{11}(l_{2,\omega})(\hat {\bf e}_\omega^H {\bf u}^n) + M_{11,\omega}\hat {\bf e}_\omega^H(L_N-l_{2,\omega}I){\bf u}^n \nonumber \\
&=& S_{11}(l_{2,\omega})\hat u(\omega) - i\omega M_{11,\omega}\sum_{k\in I_\omega} \hat p(\omega-k)i(k)\hat u(k) +  \nonumber \\
& & M_{11,\omega}\sum_{k\in I_\omega}\hat q(\omega-k)\hat u(k), \label{eqz11} \\
\hat{\bf z}_{21}(\omega)
&=& S_{21}(l_{2,\omega})\hat u(\omega) - i\omega M_{21,\omega}\sum_{k\in I_\omega} \hat p(\omega-k)i(k)\hat u(k) +  \nonumber \\
& & M_{21,\omega}\sum_{k\in I_\omega}\hat q(\omega-k)\hat u(k), \label{eqz21} \\
\hat{\bf z}_{12}(\omega) &=& S_{12}(l_{2,\omega}) \hat {u}_t(\omega)  - i\omega M_{12,\omega}\sum_{k\in I_\omega} \hat p(\omega-k)i(k)\hat {u}_t(k) +  \nonumber \\
& & M_{12,\omega}\sum_{k\in I_\omega}\hat q(\omega-k)\hat {u}_t(k), \label{eqz12} \\
 \hat{\bf z}_{22}(\omega) &=& S_{22}(l_{2,\omega}) \hat {u}_t(\omega) - i\omega M_{22,\omega}\sum_{k\in I_\omega} \hat p(\omega-k)i(k)\hat {u}_t(k) +  \nonumber \\
& & M_{22,\omega}\sum_{k\in I_\omega}\hat q(\omega-k)\hat {u}_t(k), \label{eqz22}
\end{eqnarray}
where $i=\sqrt{-1}$ and $\omega=-N/2+1,\ldots,N/2$.
To obtain these formulas, we used the simplification
\begin{eqnarray*}
\hat {\bf e}_\omega^H(L_N-l_{2,\omega}I){\bf u}^n 
&=& \hat {\bf e}_\omega^H [-D_N P_N D_N+Q_N]{\bf u}^n - l_{2,\omega}(\hat {\bf e}_\omega^H {\bf u}^n)\\
&=& -i\omega \hat {\bf e}_\omega^HP_N D_N{\bf u}^n + \hat {\bf e}_\omega^H Q_N {\bf u}^n - l_{2,\omega}(\hat {\bf e}_\omega^H {\bf u}^n)\\
&=& ( \bar{p}\omega^2 +\bar{q})(\hat {\bf e}_\omega^H{\bf u}^n) - i\omega\hat {\bf e}_\omega^H\tilde{P}_ND_N{\bf u}^n + \hat {\bf e}_\omega^H\tilde{Q}_N{\bf u}^n - l_{2,\omega}(\hat {\bf e}_\omega^H {\bf u}^n)\\
&=&  - i\omega\hat {\bf e}_\omega^H\tilde{P}_ND_N{\bf u}^n + \hat {\bf e}_\omega^H\tilde{Q}_N{\bf u}^n.
\end{eqnarray*}

To bound error, we need to establish an upper bound of a norm of the approximate solution operator $S_N(\Delta t)$. 
We elect to use the $C$-norm, defined by
$$\|\left(u,v\right)\|_C^2 = \langle u, Cu \rangle + \langle v, v \rangle,$$
where, as before, $\langle \cdot,\cdot\rangle$ is the standard inner product on $(0,2\pi)$, and its discrete counterpart, the
$C_N$-norm, defined by 
$$\|\left({\bf u},{\bf v}\right)\|_{C_N}^2={\bf u}^TC_N{\bf u}+\|{\bf v}\|_2^2.$$ 
The $N\times N$ matrix $C_N$ discretizes the constant-coefficient differential operator $C=-\bar{p}\partial_{xx}+\bar{q}I$, where $u$ and $v$ are $N$-vectors. We choose to bound the $C_N$-norm of the solution operator for convenience, because the operator $C_N$ has a very simple expression in Fourier space due to its constant coefficients, which simplifies the analysis. 

\subsection{Stability}

We wish to express $\|S_N(\Delta t)\|_{C_N}$ as the 2-norm of some matrix, since that will be easier to bound.
We define $\|S_N(\Delta t)\|_{C_N}$ by
$$\|S_N(\Delta t)\|_{C_N}^2=\sup_{{\bf w}=({\bf u},{\bf v})\neq{\bf 0}}\dfrac{\|S_N(\Delta t){\bf w}\|_{C_N}^2}{\|{\bf w}\|_{C_N}^2}.$$ 
Then \begin{equation}\|S_N(\Delta t)\|_{C_N}^2=\sup_{({\bf u},{\bf v})\neq{\bf 0}} \dfrac{{\bf \tilde{u}}^TC_N{\bf \tilde{u}}+\|{\bf\tilde{v}}_2\|^2}{{\bf u}^TC_N{\bf u}+\|{\bf v}_2\|^2}, \label{cnormofP} \end{equation} where $\begin{bmatrix} {\bf \tilde{u}}\\ {\bf \tilde{v}} \end{bmatrix}=S_N(\Delta t)\begin{bmatrix} {\bf u}\\ {\bf v} \end{bmatrix}=S_N(\Delta t){\bf w}$.
In matrix form, we have
\begin{eqnarray*}
\|S_N(\Delta t)\|_{C_N}^2&=&\sup\dfrac{{\bf w}^TS_N(\Delta t)^T\tilde{C}_N S_N(\Delta t){\bf w}}{{\bf w}^T\tilde{C}_N{\bf w}}
=\sup\dfrac{{\bf w}^TS_N(\Delta t)^T\tilde{C}_N S_N(\Delta t){\bf w}}{(\tilde{C}_N^{1/2}{\bf w})^T(\tilde{C}_N^{1/2}{\bf w})},
\end{eqnarray*}
where $\tilde{C}_N=\begin{bmatrix} C_N & 0\\ 0 & I \\\end{bmatrix}$. Let ${\bf z}=\tilde{C}_N^{1/2}{\bf w}$. Then 
\begin{eqnarray*}
\|S_N(\Delta t)\|_{C_N}^2
&=&\sup\dfrac{{\bf z}^T(\tilde{C}_N^{1/2}S_N(\Delta t)\tilde{C}_N^{-1/2})^T(\tilde{C}_N^{1/2}S_N(\Delta t)\tilde{C}_N^{-1/2}){\bf z}}{{\bf z}^T{\bf z}}.
\end{eqnarray*}
Therefore, $$\|S_N(\Delta t)\|_{C_N}=\|B\|_2=\sqrt{\rho\left(B^TB\right)} \leq\sqrt{\|G\|_\infty},$$ where $B=\tilde{C}_N^{1/2}S_N(\Delta t)\tilde{C}_N^{-1/2}$, and
 \begin{eqnarray} 
 G&=&B^TB 
= \begin{bmatrix}
G_{11} & G_{12}\\
G_{21} & G_{22}
\end{bmatrix},
\label{G}
\end{eqnarray}
with
\begin{eqnarray}
G_{11} & = & C_N^{-1/2}{S}_{N,11}(\Delta t)^TC_N{S}_{N,11}(\Delta t)C_N^{-1/2} + \nonumber \\ && 
C_N^{-1/2}{S}_{N,21}(\Delta t)^T{S}_{N,21}(\Delta t)C_N^{-1/2}, \\
G_{12} & = & C_N^{-1/2}{S}_{N,11}(\Delta t)^TC_N{S}_{N,12}(\Delta t) + C_N^{-1/2}{S}_{N,21}(\Delta t)^T{S}_{N,22}(\Delta t),\\
G_{21} & = & {S}_{N,12}(\Delta t)^TC_N{S}_{N,11}(\Delta t)C_N^{-1/2} + {S}_{N,22}(\Delta t)^T {S}_{N,21}(\Delta t)C_N^{-1/2}, \\
G_{22} & = & {S}_{N,12}(\Delta t)^TC_N{S}_{N,12}(\Delta t) + {S}_{N,22}(\Delta t)^T {S}_{N,22}(\Delta t).
\end{eqnarray}
To obtain a bound for the $C_N$-norm of the overall approximate solution operator $S_N(\Delta t)$, we will proceed
by bounding $\|G\|_\infty$ through bounding $\|G_{ij}\|_\infty$ for $i,j=1,2$.

To bound the norm of each such block, we use expressions for the computed
solution ${\bf u}^{n+1}$, ${\bf u}_t^{n+1}$ at time $t_{n+1}$, in terms of
${\bf u}^n$ and ${\bf u}_t^{n}$.  
We begin with
\begin{eqnarray*} 
\Vert ({\bf u}^{n+1},\textbf{u}_t^{n+1})\Vert_{C_N}^2 & = & (\textbf{u}^{n+1})^TC_N(\textbf{u}^{n+1})  +  (\textbf{u}_t^{n+1})^T(\textbf{u}_t^{n+1})\\
&=&  [{\bf u}^n]^T \bar{G}_{11} {\bf u}^n + [{\bf u}^n]^T\bar{G}_{12}{\bf u}_t^n + [{\bf u}_t^n]^T\bar{G}_{21}{\bf u}^n + [{\bf u}_t^n]^T\bar{G}_{22}{\bf u}_t^n
\end{eqnarray*}
where
\begin{eqnarray*}
\bar{G}_{11} & = & {S}_{N,11}(\Delta t)^T C_N {S}_{N,11}(\Delta t) + {S}_{N,21}(\Delta t)^T {S}_{N,21}(\Delta t), \\
\bar{G}_{12} & = & {S}_{N,11}(\Delta t)^T C_N {S}_{N,12}(\Delta t) + {S}_{N,21}(\Delta t)^T {S}_{N,22}(\Delta t), \\
\bar{G}_{21} & = & {S}_{N,12}(\Delta t)^T C_N {S}_{N,12}(\Delta t) + {S}_{N,22}(\Delta t)^T {S}_{N,21}(\Delta t), \\
\bar{G}_{22} & = & {S}_{N,12}(\Delta t)^T C_N {S}_{N,12}(\Delta t) + {S}_{N,22}(\Delta t)^T {S}_{N,22}(\Delta t).
\end{eqnarray*}
We note that
$$
G_{11} = C_N^{-1/2} \bar{G}_{11} C_N^{-1/2}, \quad
G_{12} = C_N^{-1/2} \bar{G}_{12}, \quad
G_{21} = \bar{G}_{21} C_N^{-1/2}, \quad
G_{22} = \bar{G}_{22}.
$$
Therefore, we can proceed by bounding the entries of each $\bar{G}_{ij}$, for $i,j=1,2$.

\vspace{0.1in}
\begin{lem} \label{lem31}
Assume $\hat p(\omega)=0$ and $\hat q(\omega)=0$ for $|\omega|>\omega_{\max}$. Then the matrix $G_{11}$ defined in (\ref{G}) satisfies 
\begin{eqnarray}
\|G_{11}\|_\infty &\leq& 1  +  C_{11,p} \|\tilde{p}\|_{\infty} \Delta t^2 N^2 +  C_{11,q} \|\tilde{q}\|_{\infty} \Delta t^2 + C_{11,p^2} \|\tilde{p}\|_{\infty}^2 \Delta t^2 N^2 + \nonumber \\ 
& & C_{11,pq} \|\tilde{p}\|_{\infty} \|\tilde{q}\|_{\infty} \Delta t^2 + C_{11,q^2} \|\tilde{q}\|_{\infty}^2 \Delta t^2
\label{G11bound}
\end{eqnarray}
where the constants $C_{11,p}$, $C_{11,q}$, $C_{11,p^2}$, $C_{11,pq}$, and $C_{11,q^2}$
are independent of $N$ and $\Delta t$. 
\end{lem}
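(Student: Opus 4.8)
The plan is to diagonalize $C_N$ by passing to the Fourier basis, where the whole computation becomes transparent. Since $C = -\bar p\,\partial_{xx} + \bar q I$ has constant coefficients, $C_N$ is diagonalized by $F_N$ with eigenvalues exactly $l_{2,\omega} = \bar p\omega^2 + \bar q$; write $D = \mathrm{diag}(l_{2,\omega})$. In this basis, formulas (\ref{eqz11}) and (\ref{eqz21}) identify the matrix representations $\hat S_{N,11}$, $\hat S_{N,21}$ explicitly: each has diagonal entry $S_{11}(l_{2,\omega})=\cos(l_{2,\omega}^{1/2}\Delta t)$ (resp. $S_{21}(l_{2,\omega})=-l_{2,\omega}^{1/2}\sin(l_{2,\omega}^{1/2}\Delta t)$) and off-diagonal entry $M_{i1,\omega}[\omega k\,\hat p(\omega-k)+\hat q(\omega-k)]$, which by the bandlimited hypothesis is nonzero only for $0<|\omega-k|\le\omega_{\max}$. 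Thus $G_{11} = D^{-1/2}\big(\hat S_{N,11}^H D\,\hat S_{N,11} + \hat S_{N,21}^H\hat S_{N,21}\big)D^{-1/2}$ is a banded Hermitian matrix, and I will bound $\|G_{11}\|_\infty$ in this basis (legitimate since it is the spectral radius that ultimately bounds $\|S_N(\Delta t)\|_{C_N}$, and $\rho(G_{11})\le\|G_{11}\|_\infty$ in any basis) by estimating each entry and summing the $O(\omega_{\max})$ nonzero entries in a row.

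The first step is to isolate the leading term that produces the $1$ in (\ref{G11bound}). On the diagonal, $\omega=\omega'$, the two diagonal factors contribute $l_{2,\omega}^{-1}\big(|S_{11}(l_{2,\omega})|^2 l_{2,\omega}+|S_{21}(l_{2,\omega})|^2\big)=\cos^2(l_{2,\omega}^{1/2}\Delta t)+\sin^2(l_{2,\omega}^{1/2}\Delta t)=1$, which is exactly the statement that the constant-coefficient propagator is a $C_N$-isometry; note that the $l_{2,\omega}^{-1}$ weight cancels the $l_{2,\omega}$ hidden in $|S_{21}|^2$, so there is no singularity at small $\omega$ provided $\bar q>0$.

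Everything else is a correction, which I will classify by where it lives and in which coefficients it is of which order. Cross terms (one diagonal factor, one off-diagonal factor) occur only in off-diagonal entries $0<|\omega-\omega'|\le\omega_{\max}$ and are linear in $\hat p,\hat q$; products of two off-diagonal factors are quadratic and occur in all entries. The essential estimates are the elementary bounds $|S_{11}|\le1$, $|M_{11,\omega}|\le\tfrac12\Delta t^2$, $|M_{21,\omega}|\le\Delta t$, together with combined identities that each supply the decisive factor $\Delta t^2$: on cross terms, $\overline{S_{11}(l_{2,\omega})}\,l_{2,\omega}M_{11,\omega}=\cos^2-\cos$ and $\overline{S_{21}(l_{2,\omega})}M_{21,\omega}=\sin^2$, both $O(l_{2,\omega}\Delta t^2)$ by $|\cos x-1|\le\min(2,x^2/2)$ and $|\sin x|\le\min(1,|x|)$; and on quadratic terms, the energy-weighted bounds $|M_{11,\omega}|^2 l_{2,\omega}\le\Delta t^2$ and $|M_{21,\omega}|^2\le\Delta t^2$, obtained the same way. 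Expanding $|\omega k\,\hat p(\omega-k)+\hat q(\omega-k)|^2$ and using $|\hat p(j)|\le\|\tilde p\|_\infty$, $|\hat q(j)|\le\|\tilde q\|_\infty$ then splits each quadratic correction into the five types $p,q,p^2,pq,q^2$ named in (\ref{G11bound}).

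The final step converts these into the stated powers of $N$, where the bandlimited hypothesis is used a second time and decisively: because every surviving product pairs frequencies with $|\omega-k|\le\omega_{\max}$ (and $|\omega-\omega'|\le 2\omega_{\max}$), all frequencies appearing together are comparable, so the weights $l_{2,\cdot}^{\pm1/2}$ distribute through the ratio bounds $\omega^2/l_{2,\omega}\le1/\bar p$, $|\omega|/l_{2,\omega}\le C(\bar p,\bar q)$, and $1/l_{2,\omega}\le1/\bar q$. These show that one factor $\omega^2$ can be absorbed by one $l_{2,\omega}^{-1}$ but a second residual $\omega^2$ or $m^2$ cannot, which is precisely why the pure-$p$ terms retain one factor $N^2$ while the $q$, $pq$, and $q^2$ terms do not. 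Summing the $O(\omega_{\max})$ nonzero entries per row yields (\ref{G11bound}) with constants depending only on $\bar p$, $\bar q$, and $\omega_{\max}$. I expect the main obstacle to be exactly this bookkeeping of weights: one must verify that the combined identities ($\cos^2-\cos$, $\sin^2$, and $|M_{11,\omega}|^2 l_{2,\omega}$) genuinely yield $\Delta t^2$ rather than the naive $\Delta t$ or $\Delta t^4 N^2$, and that the locality of the band keeps every $l_{2,\cdot}$ ratio bounded, so that no spurious power of $N$ survives in the $q$-only and mixed terms.
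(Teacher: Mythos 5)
Your proposal is correct and follows essentially the same route as the paper's proof: passing to the Fourier basis where $C_N$ is diagonal with eigenvalues $l_{2,\omega}$, splitting the entries of the weighted Gram matrix into the diagonal (Pythagorean) part yielding the leading $1$, cross terms linear in $\hat p,\hat q$, and quadratic convolution terms, then using the bounds $|S_{11}|\le 1$, $|M_{11,\omega}|\le\Delta t^2/2$, $|M_{21,\omega}|\le\Delta t$, $M_{11,\omega}^2 l_{2,\omega}+M_{21,\omega}^2\le 2\Delta t^2$ together with the bandlimited hypothesis to reduce each row sum to $O(\omega_{\max})$ terms with bounded frequency ratios. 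Your ``combined identities'' ($\cos^2-\cos$, $\sin^2$) are just a repackaging of the paper's separate factor bounds, and your accounting of which terms retain the factor $N^2$ (the $p$ and $p^2$ terms, via the surviving $jk$) matches the paper exactly.
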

\begin{proof} Let $\hat{I}_N = \{ -N/2+1,\ldots,N/2 \}$.
From (\ref{eqzintro}) we have
\begin{eqnarray*}
 [{\bf u}^n]^T \bar{G}_{11} {\bf u}^n&=& {\bf z}_{11}^T C_N {\bf z}_{11} + {\bf z}_{21}^T {\bf z}_{21}\\
 &=&\sum_{\omega\in \hat{I}_N} \overline{\hat{\bf z}_{11}(\omega)} \hat{\bf z}_{11}(\omega)(\bar{p}\omega^2+\bar{q}) + \sum_{\omega\in\hat{I}_N} \overline{\hat{\bf z}_{21}(\omega)} \hat{\bf z}_{21}(\omega) \\
 & = & \sum_{j\in\hat{I}_N} \sum_{k\in\hat{I}_N} \hat u(-j) \hat u(k) \left[ \bar{A} + \bar{B} + \bar{C} + \bar{D} \right]_{jk} 
\end{eqnarray*}
where, by (\ref{eqz11}) and (\ref{eqz21}), we have, for $j,k\in \hat{I}_N$,
\begin{eqnarray*}
\bar{A}_{jj} &=& \left(S_{11}(l_{2,j})\right)^2 (\overline{p} j^2 +\overline{q}) + \left(S_{21}(l_{2,j})\right)^2  \\
&=& \cos^2\left(\sqrt{\overline{p} j^2 +\overline{q}}\Delta t\right) \left(\overline{p} j^2 +\overline{q}\right) + \left(-\left(\overline{p} j^2 +\overline{q}\right)^{1/2}\sin\left(\sqrt{\overline{p} j^2 +\overline{q}}\Delta t\right)\right)^2  \\
&=& \overline{p} j^2 +\overline{q},
\end{eqnarray*}
\begin{eqnarray*}
\bar{B}_{jk} &=&  -jk S_{11}(l_{2,k}) M_{11,k} \hat p(k-j)(\overline{p} k^2 +\overline{q}) + S_{11}(l_{2,k}) M_{11,k}  \hat q(k-j)(\overline{p} k^2 +\overline{q}) + \\
& &  -jk S_{21}(l_{2,k}) M_{21,k} \hat p(k-j) +  S_{21}(l_{2,k}) M_{21,k} \hat q(k-j), \quad j \neq k,
\end{eqnarray*}
\begin{eqnarray*}
\bar{C}_{jk} &=& -jk S_{11}(l_{2,j}) M_{11,j}  \hat p(k-j)(\overline{p} j^2 +\overline{q})   +    S_{11}(l_{2,j})  M_{11,j} \hat q(k-j) (\overline{p} j^2 +\overline{q}) + \\
& & -jk S_{21}(l_{2,j})M_{21,j}  \hat p(k-j)    +    S_{21}(l_{2,j})M_{21,j} \hat q(k-j), \quad j \neq k,
\end{eqnarray*}
and
\begin{eqnarray*}
\bar{D}_{jk} 
&=&-jk \sum_{\omega\in\hat{I}_N\setminus\{k,j\}} \omega^2 \hat p(\omega-k)\hat p(j-\omega) \left(M_{11,\omega}^2(\overline{p} \omega^2 +\overline{q}) + M_{21,\omega}^2\right)  + \\
& &  (-k) \sum_{\omega\in\hat{I}_N\setminus\{k,j\}} \omega  \hat p(\omega-k) \hat q(j-\omega) \left(M_{11,\omega}^2(\overline{p} \omega^2 +\overline{q}) + M_{21,\omega}^2\right)  + \\
& &  (-j) \sum_{\omega\in\hat{I}_N\setminus\{k,j\}} \omega  \hat q(\omega-k) \hat p(j-\omega) \left(M_{11,\omega}^2(\overline{p} \omega^2 +\overline{q}) + M_{21,\omega}^2\right)  + \\
& &   \sum_{\omega\in\hat{I}_N\setminus\{k,j\}} \hat q(\omega-k) \hat q(j-\omega) \left(M_{11,\omega}^2(\overline{p} \omega^2 +\overline{q}) + M_{21,\omega}^2\right),
\end{eqnarray*}
with $\bar{A}_{jk}=0$ for $j\neq k$, and $\bar{B}_{jj} = \bar{C}_{jj} = 0$ for $j\in\hat{I}_N$.

To obtain an upper bound for $\|G_{11}\|_{\infty}$, we use the following bounds on $S_{ij}(l_{2,\omega})$ and $M_{ij,\omega}$, which are the coefficients in the linear approximations of the various components of the solution operator:
\begin{eqnarray*}
\left|S_{11}(l_{2,\omega})\right| &\leq& \left| \cos\left(l_{2,\omega}^{1/2} \Delta t\right)\right| \leq 1,\\ 
\left|S_{21}(l_{2,\omega})\right| &\leq& \left| - l_{2,\omega}^{1/2}\sin\left( l_{2,\omega}^{1/2}\Delta t\right) \right| \leq \left| l_{2,\omega}^{1/2} \right| l_{2,\omega}^{1/2}\Delta t = l_{2,\omega}\Delta t,\\
\left| M_{11,\omega}\right| &\leq& \dfrac{\Delta t^2}{2}, \\
\left| M_{11,\omega}\right| &\leq& \dfrac{\Delta t}{(\overline{p} \omega^2 +\overline{q})^{1/2}}, \\ 
\left| M_{21,\omega}\right| &\leq& \Delta t .
\end{eqnarray*}
We have multiple bounds for $M_{11}$ so that different terms will have the same order of magnitude in terms of $N$ and $\Delta t$. Then, for $j\neq k$, we have
\begin{eqnarray*}
\left| \bar{B}_{jk} \right| 
&\leq& \left|  j k \hat p(k-j)(\overline{p} k^2 +\overline{q}) \dfrac{\Delta t^2}{2} \right| +   \left| \dfrac{\Delta t^2}{2} \hat q(k-j)(\overline{p} k^2 +\overline{q}) \right| + \\
& & \left|  j k \hat p(k-j) (\overline{p}k^2+\overline{q})\Delta t^2 \right|+\left|  \hat q(k-j) (\overline{p}k^2+\overline{q})\Delta t^2  \right| \\
&\leq& \frac{3}{2} {\Delta t^2 }(\overline{p} k^2 +\overline{q}) \left( \left|  j k \hat p(k-j)\right|+ \left| \hat q(k-j)  \right| \right),
\end{eqnarray*}
\begin{eqnarray*}
|\bar{C}_{jk}| &\leq& \left|  j k \hat p(k-j)(\overline{p} j^2 +\overline{q}) \dfrac{\Delta t^2}{2} \right| + \left|  \hat q(k-j) (\overline{p} j^2 +\overline{q})\dfrac{\Delta t^2}{2} \right| + \\
& & \left|  j k \hat p(k-j) (\overline{p}j^2+\overline{q})\Delta t^2\right|  + \left|   \hat q(k-j) (\overline{p} j^2 +\overline{q})\Delta t^2 \right| \\
&\leq& \frac{3}{2} {\Delta t^2 } (\overline{p} j^2 +\overline{q})\left( \left|  j k \hat p(k-j)\right| +  \left|\hat q(k-j)  \right|\right),
\end{eqnarray*}
\begin{eqnarray*}
|\bar{D}_{jk}| 
&\leq& \left| jk \sum_{\omega\in\hat{I}_N\setminus\{k,j\}} \omega^2 |\hat  p(\omega-k)\hat p(j-\omega) |\left( 2\Delta t^2 \right) + \right. \\
& & \left.  k \sum_{\omega\in\hat{I}_N\setminus\{k,j\}} \omega  |\hat p(\omega-k) \hat q(j-\omega) |\left(2\Delta t^2\right) + \right.\\
& & \left.  j \sum_{\omega\in\hat{I}_N\setminus\{k,j\}} \omega | \hat q(\omega-k) \hat p(j-\omega) |\left(2\Delta t^2 \right)  + \right. \\
& & \left.   \sum_{\omega\in\hat{I}_N\setminus\{k,j\}} |\hat q(\omega-k) \hat q(j-\omega) |\left(2\Delta t^2 \right) \right|.
\end{eqnarray*}
From these bounds, we obtain
\begin{eqnarray}
\|G_{11}\|_\infty
&\leq&\max\limits_{1\leq j\leq N} \sum_{k\in\hat{I}_N\setminus j} (\bar{p}j^2+\bar{q})^{-1/2}\left|\bar{A}_{jk}+\bar{B}_{jk}+\bar{C}_{jk}+\bar{D}_{jk}\right| (\bar{p}k^2+\bar{q})^{-1/2}\nonumber \\
&\leq&\max\limits_{1\leq j\leq N} (\bar{p}j^2+\bar{q})^{-1/2}(\bar{p}j^2+\bar{q})^{-1/2} (\overline{p} j^2 +\overline{q}) + \nonumber \\
& & \frac{3}{2}\sum_{k\in\hat{I}_N\setminus j} \left| j k \hat p(k-j) (\bar{p}j^2+\bar{q})^{-1/2}(\bar{p}k^2+\bar{q})^{1/2}  \Delta t^2 \right| + \nonumber \\
& & \frac{3}{2}\sum_{k\in\hat{I}_N\setminus j} \left| \hat q(k-j) (\bar{p}j^2+\bar{q})^{-1/2}(\bar{p}k^2+\bar{q})^{1/2} \Delta t^2 \right|   +  \nonumber \\
& & \frac{3}{2}\sum_{k\in\hat{I}_N\setminus j} \left| j k \hat p(k-j) (\bar{p}j^2+\bar{q})^{1/2}(\bar{p}k^2+\bar{q})^{-1/2}  \Delta t^2 \right| + \nonumber \\
& & \frac{3}{2}\sum_{k\in\hat{I}_N\setminus j} \left| \hat q(k-j) (\bar{p}j^2+\bar{q})^{1/2}(\bar{p}k^2+\bar{q})^{-1/2} \Delta t^2 \right|   +  \nonumber \\
& & \sum_{k\in\hat{I}_N} \left| jk (\bar{p}j^2+\bar{q})^{-1/2}(\bar{p}k^2+\bar{q})^{-1/2} \sum_{\omega\in\hat{I}_N\setminus\{k,j\}} \omega^2 |\hat p(\omega-k)\hat p(j-\omega)|\Delta t^2   \right| + \nonumber \\
& &  \sum_{k\in\hat{I}_N} \left| k (\bar{p}j^2+\bar{q})^{-1/2}(\bar{p}k^2+\bar{q})^{-1/2} \sum_{\omega\in\hat{I}_N\setminus\{k,j\}} \omega | \hat p(\omega-k)  \hat q(j-\omega) |\Delta t^2 \right|   + \nonumber \\
& &  \sum_{k\in\hat{I}_N} \left| j (\bar{p}j^2+\bar{q})^{-1/2}(\bar{p}k^2+\bar{q})^{-1/2} \sum_{\omega\in\hat{I}_N\setminus\{k,j\}} \omega | \hat q(\omega-k) \hat p(j-\omega)|\Delta t^2 \right|  +\nonumber \\
& &  \sum_{k\in\hat{I}_N} \left| (\bar{p}j^2+\bar{q})^{-1/2} (\bar{p}k^2+\bar{q})^{-1/2}\sum_{\omega\in\hat{I}_N\setminus\{k,j\}} |\hat q(\omega-k) \hat q(j-\omega)|\Delta t^2  \right| \nonumber
\end{eqnarray}
\begin{eqnarray}
&\leq&\max\limits_{1\leq j\leq N} 1  +  \frac{3}{2} j \Delta t^2 \|\tilde{p}\|_{\infty} (\overline{p}j^2+\overline{q})^{-1/2} \sum_{k\in{I}_j} \left| k (\overline{p} k^2 +\overline{q})^{1/2} \right| +\nonumber \\
& & \frac{3}{2}\Delta t^2 \|\tilde{q}\|_{\infty} (\overline{p}j^2+\overline{q})^{-1/2} \sum_{k\in{I}_j} \left|(\overline{p} k^2 +\overline{q})^{1/2} \right|   +  \nonumber  \\
& & \frac{3}{2}j \Delta t^2 \|\tilde{p}\|_{\infty}   (\overline{p} j^2 +\overline{q})^{1/2} \sum_{k\in{I}_j} \left| k (\overline{p}k^2+\overline{q})^{-1/2}\right|  + \nonumber \\
& & \frac{3}{2}\Delta t^2 \|\tilde{q}\|_{\infty} (\overline{p}j^2+\overline{q})^{1/2} \sum_{k\in{I}_j} \left|(\overline{p}k^2+\overline{q})^{-1/2} \right|   + \nonumber   \\
& & j \Delta t^2 \|\tilde{p}\|_{\infty}^2 (\overline{p} j^2 +\overline{q})^{-1/2}  \sum_{k\in\hat{I}_N} \left| k (\overline{p} k^2 +\overline{q})^{-1/2} \sum_{\omega\in I_j\cap I_k} \omega^2 \right| + \nonumber  \\
& & \Delta t^2 \|\tilde{p}\|_{\infty} \|\tilde{q}\|_{\infty} (\overline{p} j^2 +\overline{q})^{-1/2} \sum_{k\in\hat{I}_N} \left| k (\overline{p} k^2 +\overline{q})^{-1/2} \sum_{\omega\in I_j\cap I_k} \omega \right| + \nonumber  \\
& &  j\Delta t^2 \|\tilde{p}\|_{\infty} \|\tilde{q}\|_{\infty} (\overline{p} j^2 +\overline{q})^{-1/2} \sum_{k\in\hat{I}_N} \left| (\overline{p} k^2 +\overline{q})^{-1/2}  \sum_{\omega\in I_j\cap I_k} \omega \right|  + \nonumber \\
& & \Delta t^2 \|\tilde{q}\|_{\infty}^2	(\overline{p} j^2 +\overline{q})^{-1/2}  \sum_{k\in\hat{I}_N} \left| (\overline{p} k^2 +\overline{q})^{-1/2}  \sum_{\omega\in I_j\cap I_k} 1 \right|.
\label{G11sums}
\end{eqnarray}

We can bound each of the summations in (\ref{G11sums}) as in the following examples.
\begin{itemize}
\item
We first derive a bound for
\begin{equation} \label{eqsumexp} (\overline{p}j^2+\overline{q})^{1/2}\sum_{k\in I_j}\left|  (\overline{p}k^2+\overline{q})^{-1/2} \right|.  
\end{equation}
For 
$|j|> \omega_{\max}$, we have 
\begin{eqnarray*}
(\overline{p} j^2 +\overline{q})^{1/2} \sum_{k\in I_j} \left| (\overline{p}k^2+\overline{q})^{-1/2}\right| &\leq& (\overline{p} j^2 +\overline{q})^{1/2} \sum_{k\in I_j} \frac{1}{(\overline{p}(|j|-\omega_{\max})^2+\overline{q})^{1/2}}  \\
&\leq& (\overline{p} j^2 +\overline{q})^{1/2} \frac{2\omega_{\max}}{(\overline{p}(|j|-\omega_{\max})^2+\overline{q})^{1/2}}. 
\end{eqnarray*}
As $|j|\rightarrow\infty$, we obtain 
$$\lim_{j\rightarrow\infty} (\overline{p}j^2+\overline{q})^{1/2} \sum_{k\in I_j}\left| (\overline{p}k^2+\overline{q})^{-1/2} \right| \leq 2 \omega_{\max}.$$
Therefore, the expression (\ref{eqsumexp}) can be bounded independently of $N$.
\item Next, we derive a bound for
\begin{equation} \label{eqsumexp2} (\overline{p} j^2 +\overline{q})^{-1/2} \sum_{k\in\hat{I}_N} \left| k (\overline{p} k^2 +\overline{q})^{-1/2} \sum_{\omega\in I_j\cap I_k} \omega \right|.
\end{equation}
We have
\begin{eqnarray*}
\sum_{k\in\hat{I}_N} \left| k (\overline{p} k^2 +\overline{q})^{-1/2} \sum_{\omega\in I_j\cap I_k} \omega \right| & \leq & \sum_{k\in\hat{I}_N} \sum_{\omega\in I_j\cap I_k} \left| k (\overline{p} k^2 +\overline{q})^{-1/2}  \omega \right| \\
& \leq & \sum_{\omega\in I_j} |\omega| \sum_{k\in I_\omega} \left| k (\overline{p} k^2 +\overline{q})^{-1/2}   \right| \\
& \leq & \sum_{\omega\in I_j} |\omega| \sum_{\eta\in I_0} \left| (\omega+\eta)(\overline{p} (\omega+\eta)^2 +\overline{q})^{-1/2}   \right|. \\
\end{eqnarray*}
If $j > 2\omega_{\max}$, then $\omega > \omega_{\max}$, and
\begin{eqnarray*}
\sum_{k\in\hat{I}_N} \left| k (\overline{p} k^2 +\overline{q})^{-1/2} \sum_{\omega\in I_j\cap I_k} \omega \right| & \leq & \sum_{\omega\in I_j} |j+\omega_{\max}| \sum_{\eta\in I_0} 
\left| \frac{j+2\omega_{\max}}{(\overline{p} (j-2\omega_{\max})^2 +\overline{q})^{1/2}}   \right|. \\
\end{eqnarray*}
We then have
$$\lim_{j\rightarrow\infty} (\overline{p} j^2 +\overline{q})^{-1/2} \sum_{k\in\hat{I}_N} \left| k (\overline{p} k^2 +\overline{q})^{-1/2} \sum_{\omega\in I_j\cap I_k} \omega \right| \leq \frac{1}{\bar{p}}.$$
We conclude that the expression (\ref{eqsumexp2}) can also be bounded independently of $N$.
\end{itemize}
Using a similar approach to bound the remaining summations in (\ref{G11sums}), we obtain
\begin{eqnarray*}
\|G_{11}\|_\infty &\leq&  1  +  C_{11,p} \|\tilde{p}\|_{\infty} \Delta t^2 N^2 +  C_{11,q} \|\tilde{q}\|_{\infty} \Delta t^2 + C_{11,p^2} \|\tilde{p}\|_{\infty}^2 \Delta t^2 N^2 +   \\
& & C_{11,pq} \|\tilde{p}\|_{\infty} \|\tilde{q}\|_{\infty} \Delta t^2 + C_{11,q^2} \|\tilde{q}\|_{\infty}^2 \Delta t^2,
\end{eqnarray*}
for constants $C_{11,p}$, $C_{11,q}$, $C_{11,p^2}$, $C_{11,pq}$, $C_{11,q^2}$ that are independent
of $N$ and $\Delta t$, which completes the proof.
\end{proof} 

\vspace{0.1in}
\noindent Using the same approach, we find that the matrix $G_{22}$ defined in (\ref{G}) satisfies a bound of the same form as that of $G_{11}$, with appropriate constant factors. 

\vspace{0.1in}
\begin{lem} \label{lem32}
Assume $\hat p(\omega)=0$ and $\hat q(\omega)=0$ for $|\omega|>\omega_{\max}$. Then the matrix $G_{12}$ defined in (\ref{G}) satisfies 
\begin{eqnarray}
\|G_{12}\|_{\infty} &\leq& C_{12,p} \|\tilde{p}\|_{\infty} \Delta t N + C_{12,q}  \|\tilde{q}\|_{\infty} \Delta t  + C_{12,p^2} \|\tilde{p}\|_{\infty}^2 \Delta t N + \nonumber \\ 
& & C_{12,pq} \|\tilde{p}\|_{\infty} \|\tilde{q}\|_{\infty} \Delta t + C_{12,q^2} \|\tilde{q}\|_{\infty}^2 \Delta t,
\label{G12bound}
\end{eqnarray}
where each constant is independent of $N$ and $\Delta t$.
\end{lem}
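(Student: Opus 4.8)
The plan is to follow the strategy used in the proof of Lemma~\ref{lem31}, now applied to the cross block. The starting point is the identity
$$[{\bf u}^n]^T\bar{G}_{12}{\bf u}_t^n = {\bf z}_{11}^T C_N {\bf z}_{12} + {\bf z}_{21}^T {\bf z}_{22},$$
which follows from the definition of $\bar{G}_{12}$ together with (\ref{eqzintro}), combined with the relation $G_{12} = C_N^{-1/2}\bar{G}_{12}$. I would pass to Fourier space using (\ref{eqz11})--(\ref{eqz22}) and write the form as $\sum_{j,k\in\hat{I}_N}\hat{u}(-j)\hat{u}_t(k)[\bar{A}+\bar{B}+\bar{C}+\bar{D}]_{jk}$, where $\bar{A}$ is the diagonal main$\times$main part, $\bar{B}$ and $\bar{C}$ are the single-scattering main$\times$scattering parts carrying one factor of $\hat{p}$ or $\hat{q}$, and $\bar{D}$ is the double-scattering part carrying a product of two coefficients, in exact analogy with Lemma~\ref{lem31}.

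The first key observation is that the diagonal term vanishes: at $\omega=j$ the main$\times$main contribution is
$$\bar{A}_{jj} = (\bar{p}j^2+\bar{q})S_{11}(l_{2,j})S_{12}(l_{2,j}) + S_{21}(l_{2,j})S_{22}(l_{2,j}),$$
and substituting $S_{11}=S_{22}=\cos(\lambda^{1/2}\Delta t)$, $S_{12}=\lambda^{-1/2}\sin(\lambda^{1/2}\Delta t)$, and $S_{21}=-\lambda^{1/2}\sin(\lambda^{1/2}\Delta t)$ at $\lambda=l_{2,j}$ shows the two terms cancel, so $\bar{A}_{jj}=0$. This is why the bound (\ref{G12bound}) contains no $O(1)$ term, in contrast to (\ref{G11bound}).

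For the remaining terms I would reuse the row-sum characterization $\|G_{12}\|_\infty \le \max_j (\bar{p}j^2+\bar{q})^{-1/2}\sum_k |[\bar{B}+\bar{C}+\bar{D}]_{jk}|$, noting that here only a single left factor $(\bar{p}j^2+\bar{q})^{-1/2}$ appears, since $G_{12}$ is scaled by $C_N^{-1/2}$ on one side only. In addition to the bounds on $S_{11}$, $S_{21}$, $M_{11}$, $M_{21}$ already established in Lemma~\ref{lem31}, I would record $|S_{12}(l_{2,\omega})|\le\Delta t$, $|S_{22}(l_{2,\omega})|\le 1$, $M_{22,\omega}=M_{11,\omega}$, and the two bounds $|M_{12,\omega}|\le \Delta t^3/6$ and $|M_{12,\omega}|\le 2\Delta t/(\bar{p}\omega^2+\bar{q})$; crucially, I would also invoke the alternative bound $|S_{21}(l_{2,\omega})|\le(\bar{p}\omega^2+\bar{q})^{1/2}$ coming from $|\sin|\le1$.

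The main obstacle is the power bookkeeping forced by the single-sided scaling: for each term one must select the bound on the $S$ and $M$ coefficients that produces matching orders, so that the single-scattering $p$-terms and double-scattering $p^2$-terms land at order $\Delta t N$ while the $q$, $pq$, and $q^2$ terms land at order $\Delta t$. For instance, in ${\bf z}_{21}^T{\bf z}_{22}$ the naive bound $|S_{21}|\le l_{2,\omega}\Delta t$ overshoots to $\Delta t^2 N^2$, whereas pairing $|S_{21}(l_{2,j})|\le(\bar{p}j^2+\bar{q})^{1/2}$ with $|M_{22,j}|\le\Delta t/(\bar{p}j^2+\bar{q})^{1/2}$ and the left factor $(\bar{p}j^2+\bar{q})^{-1/2}$ (using $|j|(\bar{p}j^2+\bar{q})^{-1/2}\le 1/\sqrt{\bar{p}}$) yields the correct $\Delta t N$. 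Once the appropriate bounds are chosen, each resulting summation over the bandlimited index sets $I_j$, $I_\omega$ is bounded independently of $N$ by the same computations as in the worked examples (\ref{eqsumexp}) and (\ref{eqsumexp2}), producing the stated constants and completing the proof.
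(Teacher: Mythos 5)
Your proposal is correct and follows essentially the same route as the paper's proof: the same identity $[{\bf u}^n]^T\bar{G}_{12}{\bf u}_t^n={\bf z}_{11}^TC_N{\bf z}_{12}+{\bf z}_{21}^T{\bf z}_{22}$, the same $\bar{A}+\bar{B}+\bar{C}+\bar{D}$ decomposition with the cancellation $\bar{A}_{jj}=0$, the same one-sided $(\bar{p}j^2+\bar{q})^{-1/2}$ row-sum scaling, and the same bandlimited summation bounds. The only differences are immaterial choices of which equivalent bounds on $S_{ij}$ and $M_{ij,\omega}$ to pair (e.g., the paper uses $|S_{21}|\leq l_{2,\omega}\Delta t$ with $|M_{22,\omega}|\leq 2/(\bar{p}\omega^2+\bar{q})$ where you use $|S_{21}|\leq(\bar{p}\omega^2+\bar{q})^{1/2}$ with $|M_{22,\omega}|\leq\Delta t/(\bar{p}\omega^2+\bar{q})^{1/2}$), which yield the same orders in $\Delta t$ and $N$.
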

\begin{proof}
From (\ref{eqzintro}), we have
\begin{eqnarray*}
[{\bf u}^n]^T\bar{G}_{12}{\bf u}_t^n&=&\sum_{\omega\in\hat{I}_N} \overline{\hat{\bf z}_{11}(\omega)} \hat{\bf z}_{12}(\omega)(\bar{p}\omega^2+\bar{q}) + \sum_{\omega\in\hat{I}_N} \overline{\hat{\bf z}_{21}(\omega)} \hat{\bf z}_{22}(\omega) \\
& = & \sum_{j\in\hat{I}_N} \sum_{k\in\hat{I}_N} \hat u(-j) \hat u_t(k) \left[ \bar{A} + \bar{B} + \bar{C} + \bar{D} \right]_{jk},
\end{eqnarray*}
where, for $j,k\in\hat{I}_N$,
\begin{eqnarray*}
\bar{A}_{jj} &=& S_{11}(l_{2,j})S_{12}(l_{2,j}) (\bar{p}j^2+\bar{q}) + S_{21}(l_{2,j}) S_{22}(l_{2,j}) \\
&=&  \cos\left((\overline{p}j^2+\overline{q})^{1/2}\Delta t\right)  (\overline{p}j^2 +q)^{-1/2}\sin\left(\sqrt{\overline{p}j^2 +q}\Delta t\right)  (\bar{p}j^2+\bar{q}) +  \\
& &  \left(-(\overline{p}j^2+\overline{q})^{1/2}\sin\left((\overline{p}j^2+\overline{q})^{1/2}\Delta t\right) \cos\left(\sqrt{\overline{p}j^2+\overline{q}}\Delta t\right)\right)  \\
&=& \cos\left((\overline{p}j^2+\overline{q})^{1/2}\Delta t\right)  (\overline{p}j^2 +q)^{1/2}\sin\left(\sqrt{\overline{p}j^2 +q}\Delta t\right)  -  \\
& & (\overline{p}j^2+\overline{q})^{1/2}\sin\left((\overline{p}j^2+\overline{q})^{1/2}\Delta t\right) \cos\left(\sqrt{\overline{p}j^2+\overline{q}}\Delta t\right) \\
&=& 0,
\end{eqnarray*}
\begin{eqnarray*}
\bar{B}_{jk} &=&  -jk S_{12}(l_{2,k}) M_{11,k} \hat p(k-j) (\bar{p}k^2+\bar{q}) +   S_{12}(l_{2,k}) M_{11,k} \hat q(k-j)(\bar{p}k^2+\bar{q}) + \\
& &  -jk S_{22}(l_{2,k})M_{21,k} \hat p(k-j)  + S_{22}(l_{2,k})M_{21,k} \hat q(k-j) , \quad j\neq k,
\end{eqnarray*}
\begin{eqnarray*}
\bar{C}_{jk} &=&  -jk S_{11}(l_{2,j}) M_{12,j} \hat p(k-j) (\bar{p}j^2+\bar{q}) +  S_{11}(l_{2,j}) M_{12,j} \hat q(k-j) (\bar{p}j^2+\bar{q}) +  \\
& &  -jk S_{21}(l_{2,j})M_{22,j} \hat p(k-j) +   S_{21}(l_{2,j}) M_{22,j} \hat q(k-j), \quad j \neq k,
\end{eqnarray*}
and
\begin{eqnarray*}
\bar{D}_{jk} 
&=& -jk \sum_{\omega\in\hat{I}_N\setminus\{k,j\}} \omega^2  \hat p(j-\omega) \hat p(\omega-k) \left( M_{11,\omega} M_{12,\omega} (\bar{p}\omega^2+\bar{q}) +  M_{21,\omega} M_{22,\omega} \right) + \\
& &  (-j) \sum_{\omega\in\hat{I}_N\setminus\{k,j\}} \omega \hat p(j-\omega) \hat q(\omega-k) \left( M_{11,\omega} M_{12,\omega}(\bar{p}\omega^2+\bar{q}) +  M_{21,\omega} M_{22,\omega} \right) + \\
& &  (-k) \sum_{\omega\in\hat{I}_N\setminus\{k,j\}} \omega \hat q(j-\omega) \hat p(\omega-k) \left( M_{11,\omega} M_{12,\omega} (\bar{p}\omega^2+\bar{q}) +   M_{21,\omega} M_{22,\omega} \right) + \\
& &  \sum_{\omega\in\hat{I}_N\setminus\{k,j\}} \hat q(j-\omega) \hat q(\omega-k) \left( M_{11,\omega} M_{12,\omega} (\bar{p}\omega^2+\bar{q}) + M_{21,\omega} M_{22,\omega} \right),
\end{eqnarray*}
with $\bar{A}_{jk}=0$ for $j\neq k$, and $\bar{B}_{jj} = \bar{C}_{jj} = 0$ for $j\in\hat{I}_N$.

To obtain an upper bound for $\|G_{12}\|_{\infty}$, we use the following bounds on $S_{ij}(l_{2,\omega})$ and $M_{ij,\omega}:$
\begin{eqnarray*}
|S_{12}(l_{2,\omega})| &\leq& \left| l_{2,\omega}^{-1/2}\sin(l_{2,\omega}^{1/2}\Delta t)\right| \leq \left| l_{2,\omega}^{-1/2}\right| l_{2,\omega}^{1/2}\Delta t = \Delta t,\\ 
|M_{11,\omega}| &=& |M_{22,\omega}| \leq \frac{2}{\overline{p}\omega^2+\overline{q}}, \\
|M_{12,\omega}| &\leq& \dfrac{\Delta t}{\overline{p}\omega^2+\overline{q}}.
\end{eqnarray*}
Then we have
\begin{eqnarray*}
|\bar{B}_{jk}| 
&\leq&3 \left( \left|  j k \hat p(k-j) \Delta t \right| + \left|  \hat q(k-j) \Delta t   \right|\right),
\end{eqnarray*}
\begin{eqnarray*}
|\bar{C}_{jk}| 
&\leq& 3\left( \left|   j k \hat p(k-j) \Delta t \right| + \left|  \hat q(k-j) \Delta t  \right|\right),
\end{eqnarray*} 
and
\begin{eqnarray*}
|\bar{D}_{jk}| &\leq& \left| jk \sum_{\omega\in\hat{I}_N\setminus\{k,j\}} \omega^2 | \hat p(j-\omega) \hat p(\omega-k) |\frac{4\Delta t}{\overline{p}\omega^2+\overline{q}} + \right. \\
& & \left. j \sum_{\omega\in\hat{I}_N\setminus\{k,j\}} \omega |\hat p(j-\omega) \hat q(\omega-k) |\frac{4\Delta t}{\overline{p}\omega^2+\overline{q}} + \right. \\
& & \left.  k \sum_{\omega\in\hat{I}_N\setminus\{k,j\}} \omega |\hat q(j-\omega) \hat p(\omega-k) |\frac{4\Delta t}{\overline{p}\omega^2+\overline{q}} + \right. \\
& & \left.  \sum_{\omega\in\hat{I}_N\setminus\{k,j\}} |\hat q(j-\omega) \hat q(\omega-k) |\frac{4\Delta t}{\overline{p}\omega^2+\overline{q}}  \right|.
\end{eqnarray*}
From these bounds, we obtain
\begin{eqnarray*}
\|G_{12}\|_{\infty} &\leq& \max\limits_{1\leq j\leq N} \sum_{k\in\hat{I}_N\setminus j} \left|\left(\bar{A}_{jk}+\bar{B}_{jk}+\bar{C}_{jk}+\bar{D}_{jk}\right)(\bar{p}j^2+\bar{q})^{-1/2}\right|\\
&\leq&  \max\limits_{1\leq j\leq N} 6\sum_{k\in\hat{I}_N\setminus j} \left|  j k \hat p(k-j) (\bar{p}j^2+\bar{q})^{-1/2} \Delta t   \right| + \\
& & 6\sum_{k\in\hat{I}_N\setminus j} \left|  \hat q(k-j) (\bar{p}j^2+\bar{q})^{-1/2} \Delta t \right| + \\
& & \sum_{k\in\hat{I}_N\setminus j} \left|   jk \sum_{\omega\in\hat{I}_N\setminus\{k,j\}}  \omega^2 |\hat p(j-\omega) \hat p(\omega-k) |(\bar{p}j^2+\bar{q})^{-1/2} \frac{4\Delta t}{\overline{p}\omega^2+\overline{q}} \right| +  \\
& & \sum_{k\in\hat{I}_N\setminus j} \left|  j \sum_{\omega\in\hat{I}_N\setminus\{k,j\}} \omega |\hat p(j-\omega) \hat q(\omega-k) |(\bar{p}j^2+\bar{q})^{-1/2} \frac{4\Delta t}{\overline{p}\omega^2+\overline{q}} \right|  + \\
& & \sum_{k\in\hat{I}_N\setminus j} \left|  k\sum_{\omega\in\hat{I}_N\setminus\{k,j\}} \omega |\hat q(j-\omega) \hat p(\omega-k) |(\bar{p}j^2+\bar{q})^{-1/2} \frac{4\Delta t}{\overline{p}\omega^2+\overline{q}} \right|  + \\
& & \sum_{k\in\hat{I}_N\setminus j} \left|  \sum_{\omega\in\hat{I}_N\setminus\{k,j\}} |\hat q(j-\omega) \hat q(\omega-k) |(\bar{p}j^2+\bar{q})^{-1/2} \frac{4\Delta t}{\overline{p}\omega^2+\overline{q}} \right|  \\
&\leq&  \max\limits_{1\leq j\leq N}  6j \|\tilde{p}\|_{\infty} (\bar{p}j^2+\bar{q})^{-1/2} \Delta t  \sum_{k\in I_j} \left| k \right|   +   \\
& & 6\|\tilde{q}\|_{\infty} (\bar{p}j^2+\bar{q})^{-1/2} \Delta t  (2\omega_{max}) + \\
& &  j \|\tilde{p}\|_{\infty}^2 (\bar{p}j^2+\bar{q})^{-1/2} \Delta t \frac{4}{\overline{p}} \sum_{k\in\hat{I}_N} \sum_{\omega\in I_j \cap I_k} |k| +  \\
& & \|\tilde{p}\|_{\infty} \|\tilde{q}\|_{\infty} (\bar{p}j^2+\bar{q})^{-1/2} \Delta t  \frac{4}{\overline{p}}   \sum_{\omega\in I_j}\left( 2j\omega_{max}  + \sum_{k\in I_\omega} \left|k \right| \right)+ \\
& & \|\tilde{q}\|_{\infty}^2 (\bar{p}j^2+\bar{q})^{-1/2} \Delta t \frac{4}{\overline{q}} (4\omega_{max}^2).
\end{eqnarray*}

These summations can be bounded as in the proof of Lemma \ref{lem31}.  As an example,
we obtain a bound for
\begin{equation} \label{eqsumexp3}
(\bar{p}j^2+\bar{q})^{-1/2} \sum_{k\in I_j}\left| k \right|.
\end{equation}
If $|j| > \omega_{\max}$, then 
\begin{equation}
\sum_{k\in I_j}\left| k \right|=\sum_{\eta\in I_0} \left| j-\eta \right|=\sum_{\eta=1}^{\omega_{\max}} \left|j-\eta\right| + |j+\eta|=\sum_{\eta=1}^{\omega_{\max}}2|j|=2|j|\omega_{\max}.\label{sumk2} \end{equation}
It follows that
$$\lim_{j\rightarrow\infty} (\bar{p}j^2+\bar{q})^{-1/2} \sum_{k\in I_j}\left| k \right| 
=\lim_{j\rightarrow\infty} (\bar{p}j^2+\bar{q})^{-1/2}2|j|\omega_{\max} = \frac{2\omega_{\max}}{\bar{p}}.$$
That is, the expression (\ref{eqsumexp3}) is bounded independently of $N$, whereas it would be
$O(N)$ if $p(x)$ was not bandlimited.

Proceeding in a similar manner for the remaining summations, we conclude that there exist
constants $C_{12,p}$, $C_{12,q}$, $C_{12,p^2}$, $C_{12,pq}$ and $C_{12,q^2}$ such that
\begin{eqnarray*}
\|G_{12}\|_{\infty} &\leq& C_{12,p} \|\tilde{p}\|_{\infty} \Delta t N + C_{12,q}  \|\tilde{q}\|_{\infty} \Delta t  + C_{12,p^2} \|\tilde{p}\|_{\infty}^2 \Delta t N + \\
& & C_{12,pq} \|\tilde{p}\|_{\infty} \|\tilde{q}\|_{\infty} \Delta t + C_{12,q^2} \|\tilde{q}\|_{\infty}^2 \Delta t,
\end{eqnarray*}
which completes the proof.
\end{proof}

\vspace{0.1in}
\noindent Using the same approach, it can be shown that 
the matrix $G_{21}$ in (\ref{G}) satisfies a bound of the same form
as that of $G_{12}$.

We now prove a result that gives us reason to believe that 
the second-order KSS method applied to (\ref{pde}), (\ref{ICs}), (\ref{BCs}) may be unstable. 

\vspace{0.1in}
\begin{thm} \label{thm33}
Assume $\hat p(\omega)=0$ and $\hat q(\omega)=0$ for $|\omega|>\omega_{\max}$. Then the solution operator $S_N(\Delta t)$ satisfies 
\begin{eqnarray} \label{eqthm33}
\|S_N(\Delta t)\|_{C_N}\leq1+(C_p\|\tilde{p}\|_\infty N+C_q\|\tilde{q}\|_\infty )\Delta t,
\end{eqnarray}
where the constants $C_p$ and $C_q$ are independent of $N$ and $\Delta t$.
\end{thm}
\begin{proof} From
\begin{equation}
\left\| \begin{bmatrix}
G_{11} & G_{12} \\
G_{21} & G_{22}
\end{bmatrix} \right\|_{\infty}
\leq \max\{\|G_{11}\|_{\infty} + \|G_{12}\|_{\infty}, \|G_{21}\|_{\infty} + \|G_{22}\|_{\infty}\},
\end{equation}
we have, for some $k\in\{1,2\}$,
$$\|S_N(\Delta t)\|_{C_N}=\|B\|_2 \leq \sqrt{\|G\|_{\infty}} \leq \sqrt{\|G_{k1}\|_{\infty} + \|G_{k2}\|_{\infty}}.$$
From Lemma \ref{lem31} and Lemma \ref{lem32}, we have 
\begin{eqnarray}
\|G\|_{\infty} &\leq& 1 + \Delta t N \left( C_{k2,p} \|\tilde{p}\|_{\infty}+C_{k2,p^2} \|\tilde{p}\|_{\infty}^2 \right) +  \Delta t^2 N^2 \left( C_{k1,p} \|\tilde{p}\|_{\infty}+C_{k1,p^2} \|\tilde{p}\|_{\infty}^2 \right) + \nonumber \\ 
& & \Delta t \left( C_{k2,q}  \|\tilde{q}\|_{\infty}  + C_{k2,pq} \|\tilde{p}\|_{\infty} \|\tilde{q}\|_{\infty} + C_{k2,q^2} \|\tilde{q}\|_{\infty}^2 \right) + \nonumber \\ 
& & \Delta t^2 \left( C_{k1,q} \|\tilde{q}\|_{\infty} + C_{k1,pq} \|\tilde{p}\|_{\infty} \|\tilde{q}\|_{\infty} + C_{k1,q^2} \|\tilde{q}\|_{\infty}^2 \right).
\label{normofG}
\end{eqnarray}
Let $$R_1=N \left( C_{k2,p} \|\tilde{p}\|_{\infty}+C_{k2,p^2} \|\tilde{p}\|_{\infty}^2 \right)+C_{k2,q}  \|\tilde{q}\|_{\infty}  + C_{k2,pq} \|\tilde{p}\|_{\infty} \|\tilde{q}\|_{\infty} + C_{k2,q^2} \|\tilde{q}\|_{\infty}^2,$$ 
$$R_2=N^2 \left( C_{k1,p} \|\tilde{p}\|_{\infty}+C_{k1,p^2} \|\tilde{p}\|_{\infty}^2 \right)+C_{k1,q} \|\tilde{q}\|_{\infty} + C_{k1,pq} \|\tilde{p}\|_{\infty} \|\tilde{q}\|_{\infty} + C_{k1,q^2} \|\tilde{q}\|_{\infty}^2,$$
and $R=\max\{R_2^{1/2},R_1/2\}$. 
We then have
$$\|S_N(\Delta t)\|_{C_N}\leq\sqrt{\|G\|_\infty}\leq 1+R\Delta t\leq1+(C_p\|\tilde{p}\|_\infty N+C_q\|\tilde{q}\|_\infty )\Delta t,$$
from which the result follows. 
\end{proof}

\vspace{0.1in}
While Theorem \ref{thm33} does not prove that the bound in (\ref{eqthm33}) is sharp,
numerical experiments indicate that it actually is.  In the case of $p(x)\equiv \textrm{constant}$,
we obtain a more favorable stability result.

\vspace{0.1in}
\begin{cor} \label{corstab}
\noindent Assume the leading coefficient $p(x)$ is constant. Then, under the assumptions of Theorem
\ref{thm33},
$$\|S_N(\Delta t)\|_{C_N}\leq1+C_q\|\tilde{q}\|_\infty\Delta t.$$
\end{cor}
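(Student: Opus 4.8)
The plan is to specialize the bound established in Theorem~\ref{thm33} by exploiting the definition of the perturbation $\tilde{p}$. Recall from the setup of the convergence analysis that $\tilde{p} = p - \bar{p}$, where $\bar{p}$ is the average value of $p(x)$ on $[0,2\pi]$. The central observation is that when $p(x)$ is constant it coincides with its own average, so $\tilde{p}(x) = p(x) - \bar{p} \equiv 0$ and hence $\|\tilde{p}\|_\infty = 0$.

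First I would verify that the hypotheses of Theorem~\ref{thm33} are met. The bandlimited assumption on $p$ holds trivially: a constant function has $\hat{p}(\omega) = 0$ for every $\omega \neq 0$, so in particular $\hat{p}(\omega) = 0$ for $|\omega| > \omega_{\max}$. The bandlimited assumption on $q$ is inherited directly from the corollary's hypotheses. Thus Theorem~\ref{thm33} applies, giving $\|S_N(\Delta t)\|_{C_N} \leq 1 + (C_p\|\tilde{p}\|_\infty N + C_q\|\tilde{q}\|_\infty)\Delta t$.

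Substituting $\|\tilde{p}\|_\infty = 0$ eliminates the term $C_p\|\tilde{p}\|_\infty N\Delta t$, leaving precisely $\|S_N(\Delta t)\|_{C_N} \leq 1 + C_q\|\tilde{q}\|_\infty \Delta t$, as claimed. There is no genuine obstacle in this argument; its entire content is the elementary fact that a constant function equals its own mean. The point worth emphasizing is conceptual rather than technical: the only term in Theorem~\ref{thm33} that grows with the spatial resolution $N$ is the one carrying the factor $\|\tilde{p}\|_\infty$. Its vanishing when $p$ is constant is exactly what removes the dependence on $N$ from the bound, and combined with the standard argument $\|S_N(\Delta t)\|_{C_N}^n \leq (1+C_q\|\tilde{q}\|_\infty\Delta t)^n \leq e^{C_q\|\tilde{q}\|_\infty T}$ for $n\Delta t \leq T$, this restores unconditional stability, in agreement with the previously known result for the wave equation with constant leading coefficient and bandlimited $q$.
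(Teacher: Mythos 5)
Your argument is correct and is essentially identical to the paper's proof: both rest on the single observation that a constant $p(x)$ equals its mean $\bar{p}$, so $\tilde{p}\equiv 0$ and the $N$-dependent term in the bound of Theorem~\ref{thm33} drops out. The additional remarks on why this restores unconditional stability are accurate but not needed for the corollary itself.
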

\begin{proof}
Because the leading coefficient $p(x)$ is constant, we have $\tilde{p}(x)=p(x)-\bar{p}\equiv 0$. 
The result follows immediately from the last line of the proof of Theorem \ref{thm33}.
\end{proof}

\vspace{0.1in}
\noindent Therefore, a second-order KSS method applied to (\ref{pde}), (\ref{ICs}), (\ref{BCs}), (\ref{eqLform}), under the assumptions that $p(x)$ is constant and $q(x)$ is bandlimited, is unconditionally stable.

\subsection{Consistency}

For the remainder of this convergence analysis, we assume the coefficient $p(x)$
from (\ref{eqLform}) is constant,
since stability has been proved only for this case.

Before we obtain an estimate of the local truncation error, we introduce additional notation.  
We first define the restriction operator
$${\cal R}_N {\bf f}(x) = {\bf f}({\bf x}_N),$$
and interpolation operator
$${\cal T}_N {\bf g} = {\cal T}_N \left[ \begin{array}{c} {\bf g}_1 \\ {\bf g}_2 \end{array} \right]
= \left[ \begin{array}{c} 
\displaystyle{\sum_{\omega=-N/2+1}^{N/2} e^{i\omega x} \tilde{g}_1(\omega)} \\
\\
\displaystyle{\sum_{\omega=-N/2+1}^{N/2} e^{i\omega x} \tilde{g}_2(\omega)} \\
\end{array} \right],$$
where, for $i=1,2,$
$$\tilde{g}_i(\omega) = \frac{1}{N} \sum_{j=1}^{N} e^{-i\omega x_j} [{\bf g}_i]_j.$$
Then, the operator ${\cal I}_N = {\cal T}_N {\cal R}_N$ on $L^2([0,2\pi]) \times L^2([0,2\pi])$
computes the Fourier interpolant of each component function.  By contrast, if we define
$$\hat{\cal R}_N {\bf f}(x) = \hat{\cal R}_N \left[ \begin{array}{c} f_1(x) \\ f_2(x) \end{array} \right] =
\left[ \begin{array}{c}
\displaystyle{\sum_{\omega=-N/2+1}^{N/2} e^{i\omega {\bf x}_N} \hat{f}_1(\omega)} \\
\\
\displaystyle{\sum_{\omega=-N/2+1}^{N/2} e^{i\omega {\bf x}_N} \hat{f}_2(\omega)} \\
\end{array} \right],$$
where, for $i=1,2,$
$$\hat{f}_i(\omega) = \frac{1}{2\pi} \int_0^{2\pi} e^{-i\omega x} f_i(x)\,dx,$$
then the operator ${\cal P}_N = {\cal T}_N \hat{\cal R}_N$ on $L^2([0,2\pi]) \times L^2([0,2\pi])$
is the orthogonal projection operator onto $\textrm{span}
\{ e^{i\omega x} \}_{\omega=-N/2+1}^{N/2}$.
Finally, the continuous approximate solution operator $\tilde{S}_N(\Delta t) : 
L^2([0,2\pi]) \rightarrow L^2([0,2\pi])$ is defined by $\tilde{S}_N(\Delta t) = {\cal T}_N {S}_N(\Delta t) {\cal R}_N$.

\begin{thm} \label{thmconsist}
Let ${\bf f}\in H_p^{m+1}([0,2\pi]) \times H_p^m([0,2\pi])$ for $m \geq 4$.  Then, for the problem (\ref{pde}), (\ref{ICs}), (\ref{BCs}), (\ref{eqLform}) on the domain
$(0,2\pi)\times (0,T)$, with $p(x)$ constant and $q(x)$ bandlimited, the two-node block KSS method with prescribed nodes (\ref{eqinterpw}) is consistent.  That is, 
the local truncation error satisfies
$$\frac{1}{\Delta t} \left\|{\cal I}_N \exp[\tilde{L}\Delta t]{\bf f} - \tilde{S}_N(\Delta t){\bf f} \right\|_{C} \leq C_1\Delta x^{m-1} + C_2\Delta t^2,$$
where 
the constants $C_1$ and $C_2$ are independent of 
$\Delta x$ and $\Delta t$.
\end{thm}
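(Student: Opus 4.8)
The plan is to insert the exact semidiscrete propagator $\exp[\tilde{L}_N\Delta t]$ as an intermediate operator and split the local truncation error by the triangle inequality into a \emph{spatial} part and a \emph{temporal} part. Writing
$$
\mathcal{I}_N\exp[\tilde{L}\Delta t]{\bf f} - \tilde{S}_N(\Delta t){\bf f} = \underbrace{\mathcal{T}_N\left(\mathcal{R}_N\exp[\tilde{L}\Delta t]{\bf f} - \exp[\tilde{L}_N\Delta t]\mathcal{R}_N{\bf f}\right)}_{E_{\mathrm{space}}} + \underbrace{\mathcal{T}_N\left(\exp[\tilde{L}_N\Delta t] - S_N(\Delta t)\right)\mathcal{R}_N{\bf f}}_{E_{\mathrm{time}}},
$$
I would bound $\|E_{\mathrm{space}}\|_C$ and $\|E_{\mathrm{time}}\|_C$ separately, each divided by $\Delta t$.

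To bound $E_{\mathrm{space}}$, I would use the spectral accuracy of Fourier interpolation together with the regularity of the propagator. Over a single step, $\mathcal{R}_N\exp[\tilde{L}\Delta t]{\bf f} - \exp[\tilde{L}_N\Delta t]\mathcal{R}_N{\bf f}$ is, to leading order, $\Delta t$ times the spatial consistency error $(\mathcal{R}_N\tilde{L} - \tilde{L}_N\mathcal{R}_N){\bf f}$, with the higher-order terms in $\Delta t$ controlled by the $C_N$-stability of $\exp[\tilde{L}_N\Delta t]$ established in Corollary \ref{corstab} (which guarantees the semidiscrete propagator is bounded uniformly in $N$, since $p$ is constant). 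Because the only derivative-losing term in $\tilde{L}$ is the second-derivative operator $-p\partial_{xx}$, applying it to data in $H_p^{m+1}$ and measuring the spectral differentiation error in the $H^1$-type $C$-norm costs $(m+1)-2 = m-1$ orders, so $\|(\mathcal{R}_N\tilde{L} - \tilde{L}_N\mathcal{R}_N){\bf f}\|$ is $O(\Delta x^{m-1})$; the aliasing error from pointwise multiplication by the bandlimited $q(x)$ is of the same or higher order and is controlled by $\omega_{\max}$. Dividing the resulting $O(\Delta t\,\Delta x^{m-1})$ bound by $\Delta t$ yields the term $C_1\Delta x^{m-1}$.

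The temporal part is the heart of the argument, and I would exploit the node choice (\ref{eqinterpw}). Since $l_{2,\omega}=\bar{p}\omega^2+\bar{q}$ is exactly the Fourier-diagonal eigenvalue of $L_N$ for mode $\omega$, the linear interpolant reproduces each $f_{ij}$ exactly on the diagonal, so when $q$ is also constant one has $S_N(\Delta t)=\exp[\tilde{L}_N\Delta t]$ and $E_{\mathrm{time}}=0$. Thus $E_{\mathrm{time}}$ is driven entirely by $\tilde{q}$, through the off-diagonal terms weighted by $\hat{q}(\omega-k)$, $0<|\omega-k|\le\omega_{\max}$, in (\ref{eqz11})--(\ref{eqz22}). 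I would compare these with the off-diagonal elements of the exact semidiscrete propagator, which to first order in $\tilde{q}$ are the same coefficients $\hat{q}(\omega-k)$ multiplied by the divided differences $f_{ij}[\lambda_j,\lambda_k]$; the KSS method replaces each such divided difference by the interpolation slope $M_{ij,\omega}=f_{ij}[0,l_{2,\omega}]$. These agree at leading order in $\Delta t$, so the $O(\Delta t)$ couplings cancel and the surviving discrepancy is governed by the curvature of the entries: $O(\Delta t^4)$ for $f_{11},f_{22}$, $O(\Delta t^5)$ for $f_{12}$, and $O(\Delta t^3)$ for the time-derivative entry $f_{21}(\lambda)=-\lambda^{1/2}\sin(\lambda^{1/2}\Delta t)$. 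The leading temporal error is therefore $O(\Delta t^3)$, residing in the $u_2$ component. Bounding the resulting frequency sums exactly as in Lemmas \ref{lem31} and \ref{lem32}—using the bandlimited structure of $q$ to truncate the inner sums and the Sobolev smoothness of ${\bf f}$ to control the $\omega^4$-weighted sums that arise—gives $\|E_{\mathrm{time}}\|_C=O(\Delta t^3)$, and division by $\Delta t$ yields $C_2\Delta t^2$.

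The main obstacle is controlling the high-wavenumber growth of the temporal error: the discrepancy in the $f_{21}$ entry behaves like $\omega^4\Delta t^3$ for large $\omega$, so a crude bound would introduce powers of $N$ and destroy consistency. The resolution is to spend exactly the available regularity—the decay of $\hat{f}_1(\omega)$ afforded by $H_p^{m+1}$ with $m\ge 4$ makes the $\omega^4$-weighted sum over the bandlimited coupling converge uniformly in $N$—and to reuse the summation estimates of Lemmas \ref{lem31}--\ref{lem32}, in which the bandlimited structure of $q$ and the node $l_{2,\omega}$ produce the cancellations. A secondary difficulty is the derivative bookkeeping in the $C$-norm: because it measures the first component in an $H^1$-type norm and the second in $L^2$, and because the propagator entries apply $L^{\pm 1/2}$, one must track how each half-power of $L$ shifts the effective Sobolev index so that $E_{\mathrm{space}}$ and $E_{\mathrm{time}}$ are measured consistently. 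Assembling the two bounds then gives the stated estimate.
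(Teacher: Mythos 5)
Your proposal is correct and follows essentially the same route as the paper: the identical splitting into a spatial interpolation/semidiscretization error and a temporal quadrature error, the same use of regularity preservation to get the $O(\Delta t\,\Delta x^{m-1})$ spatial bound, and the same identification that the constant leading coefficient makes $(L_N-l_{2,\omega}I)\hat{\bf e}_\omega$ reduce to multiplication by $\tilde{q}$, with the two-node interpolation remainder of $f_{21}$ supplying the limiting $O(\Delta t^3)$ term. The only cosmetic differences are that the paper bounds the temporal part directly via the Lagrange remainder $\tfrac{1}{2}f_{ij}''(\xi)(L_N-l_{1,\omega}I)(L_N-l_{2,\omega}I)$ rather than your divided-difference comparison, and it controls the spatial part with a Duhamel integral for $e^{{\cal I}_N\tilde{L}t}$ rather than an appeal to Corollary~\ref{corstab} (which concerns $S_N(\Delta t)$, not the exact semidiscrete propagator, though the latter is trivially bounded).
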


{\em Proof:}
We split the local truncation error into two components:
\begin{eqnarray*}
E_1(\Delta t,\Delta x) & = & {\cal I}_N \exp(\tilde{L}\Delta t){\bf f}(x) -  {\cal T}_N
\exp(\tilde{L}_N \Delta t){\cal R}_N{\bf f}(x) \\
E_2(\Delta t,\Delta x) & = & {\cal T}_N \exp(\tilde{L}_N \Delta t){\cal R}_N{\bf f}(x) -
 \tilde{S}_N(\Delta t){\bf f}(x) \\
& = & {\cal T}_N [\exp(\tilde{L}_N\Delta t) - S_N(\Delta t)]{\cal R}_N {\bf f}(x).
\end{eqnarray*}
First, we bound $E_1(\Delta t,\Delta x)$.  Because of the regularity of ${\bf f}$, we have
\begin{equation} \label{eqfreg}
\|{\bf f} - {\bf f}_N\|_C \leq C_0\Delta x^m
\end{equation}
for some constant $C_0$ (see \cite[Theorem 2.16]{spectime}).  Next, we note that
the exact solution ${\bf v}(x,t) = \exp[\tilde{L} t]{\bf f}(x)$ has the spectral decomposition
$${\bf v}(x,t) = \sum_{k=1}^\infty e^{\mu_k t} {\bf v}_k(x) \langle {\bf v}_k, {\bf f} 
\rangle$$
where $\{ \mu_k \}_{k=1}^\infty$ are the purely imaginary eigenvalues of $\tilde{L}$, and $\{ {\bf v}_k(x) \}_{k=1}^\infty$
are the corresponding orthonormal eigenfunctions, each of which belongs to $C_p^\infty[0,2\pi]$.  Using
this spectral decomposition, it can be shown using an approach similar to that used in 
\cite[Section 7.2, Theorem 6]{evans} for other hyperbolic PDEs that if ${\bf f}\in H_p^{m+1}([0,2\pi]) \times H_p^m([0,2\pi])$, then
${\bf v}(x,t)\in L^2(0,T,H_p^{m+1}([0,2\pi])\times H_p^{m}([0,2\pi]))$.  That is, the regularity of 
${\bf f}(x)$ is preserved in ${\bf v}(x,\Delta t)$ for each fixed $\Delta t > 0$. 
Therefore, there exists a constant $C_T$ such that
\begin{equation} \label{eqsolreg}
\|(I-{\cal I}_N){\bf v}(\cdot,\Delta t)\|_{C} \leq C_T \Delta x^{m}, \quad 0 < \Delta t \leq T.
\end{equation}

Using an approach based on \cite{bardostadmor} and applied in \cite{paper2}, we write 
$E_1(\Delta t,\Delta x)$ as
\begin{eqnarray*}
{\bf e}_N(x,t) &=& {\cal I}_N {\bf v}(x,t) - {\cal T}_N
\exp(\tilde{L}_N t){\cal R}_N{\bf f}(x) \\
& = & {\cal I}_N {\bf v}(x,t) - \exp({\cal I}_N \tilde{L} t){\cal I}_N{\bf f}(x)
\end{eqnarray*}
Then, ${\bf e}_N(x,t)$ solves the IVP
\begin{eqnarray*}
\frac{\partial}{\partial t} {\bf e}_N 
&=&  {\cal I}_N \tilde{L} {\bf e}_N + {\cal I}_N \tilde{L}(I - {\cal I}_N ) {\bf v}, \quad {\bf e}_N(x,0) = {\bf 0},
\end{eqnarray*}
and therefore
$${\bf e}_N(x,\Delta t) = \int_0^{\Delta t} e^{{\cal I}_N \tilde{L}(\Delta t- \tau)} {\cal I}_N  \tilde{L}(I - {\cal I}_N){\bf v}(x,\tau)\,d\tau.$$
From (\ref{eqsolreg}), and applying \cite[Section 7.2, Theorem 6]{evans}, it follows that
\begin{eqnarray*}
 \|E_1(\Delta t,\Delta x)\|_{C} & = & \|{\bf e}_N(x,\Delta t)\|_{C} \\
& \leq & \Delta t \max_{0\leq\tau\leq\Delta t} \left\|{\cal I}_N \tilde{L}
e^{{\cal I}_N \tilde{L}(\Delta t- \tau)} (I - {\cal I}_N){\bf v}(\cdot,\tau)\right\|_{C} \\
& \leq & C_1\Delta t\Delta x^{m-1},
\end{eqnarray*}
where the constant $C_1$ is independent of $\Delta x$ and $\Delta t$.  Here we note that
because the coefficients of $L$ are assumed to be constant or bandlimited, $E_1(\Delta t, \Delta x)$ does not
include aliasing error.


Now, we examine $E_2(\Delta t,\Delta x)$.  If we let
$$
{\cal R}_N {\bf f} = \left[ \begin{array}{c} {\bf f}_{N,1} \\ {\bf f}_{N,2} \end{array} \right], \quad
E_2(\Delta t,\Delta x) = {\cal T}_N \left[
\begin{array}{c} {\bf e}_{N,1} \\ {\bf e}_{N,2}
\end{array}
\right],
$$
then we have
\begin{eqnarray*}
{\bf e}_{N,1} & = & [\cos(L_N^{1/2} \Delta t) - S_{N,11}(\Delta t)] {\bf f}_{N,1} + 
 [L_N^{-1/2} \sin(L_N^{1/2} \Delta t) - S_{N,12}(\Delta t)] {\bf f}_{N,2}, \\
{\bf e}_{N,2} & = & [-L_N^{1/2} \sin(L_N^{1/2} \Delta t) - S_{N,21}(\Delta t)] {\bf f}_{N,1} + 
 [\cos(L_N^{1/2} \Delta t) - S_{N,22}(\Delta t)  ]{\bf f}_{N,2}.
\end{eqnarray*}
We have, by Parseval's identity,
$$\|E_2(\Delta t,\Delta x)\|_{C}^2 = 2\pi \sum_{\omega=-N/2+1}^{N/2} (\overline{p}\omega^2+\overline{q}) \left|  \frac{1}{N}
\hat{\bf e}_\omega^H {\bf e}_{N,1} \right|^2 + \left|  \frac{1}{N}
\hat{\bf e}_\omega^H {\bf e}_{N,2} \right|^2.$$
For each $\omega=-N/2+1,\ldots,N/2$, and $i,j=1,2$, we use the polynomial
interpolation error in $S_{N,ij}$ to obtain
\begin{eqnarray*}
\hat{\bf e}_\omega^H {\bf e}_{N,1}
& = & \frac{1}{2} \left. \frac{\partial^2}{\partial\lambda^2} \left[ \cos(\lambda^{1/2} \Delta t) \right]\right|_{\lambda=\xi_{\omega,11}} \hat{\bf e}_\omega^H (L_N - l_{1,\omega} I) (L_N - l_{2,\omega} I) {\bf f}_{N,1} + \\
& & \frac{1}{2} \left. \frac{\partial^2}{\partial\lambda^2} \left[ \lambda^{-1/2} \sin(\lambda^{1/2} \Delta t) \right]\right|_{\lambda=\xi_{\omega,12}} \hat{\bf e}_\omega^H (L_N - l_{1,\omega} I) (L_N - l_{2,\omega} I) {\bf f}_{N,2}, \\
\hat{\bf e}_\omega^H {\bf e}_{N,2}, 
& = & \frac{1}{2} \left. \frac{\partial^2}{\partial\lambda^2} \left[-\lambda^{1/2} \sin(\lambda^{1/2} \Delta t) \right]\right|_{\lambda=\xi_{\omega,21}} \hat{\bf e}_\omega^H (L_N - l_{1,\omega} I) (L_N - l_{2,\omega} I) {\bf f}_{N,1} + \\
& & \frac{1}{2} \left. \frac{\partial^2}{\partial\lambda^2} \left[  \cos(\lambda^{1/2} \Delta t) \right]\right|_{\lambda=\xi_{\omega,22}} \hat{\bf e}_\omega^H (L_N - l_{1,\omega} I) (L_N - l_{2,\omega} I) {\bf f}_{N,2},
\end{eqnarray*}
where $\xi_{\omega,ij} \in[l_{1,\omega},l_{2,\omega}]$ for $i,j=1,2$.
In view of the regularity of ${\bf f}(x)$, and the fact that $L_N$ is a discretization
of a second-order differential operator with bandlimited coefficients, and a constant leading coefficient,
we have
$$
\left| \frac{1}{N}\hat{\bf e}_0^H (L_N - l_{1,0} I) (L_N - l_{2,0} I) {\bf f}_{N,j}\right|
= \left| \frac{1}{N} (L_N \tilde{\bf q}_N)^T {\bf f}_{N,j}\right| \leq \|L_N \tilde{\bf q}_N\|_\infty
\|{\bf f}_{N,j}\|_\infty.
$$
It follows that there exist constants $C_{ij}$, for $i,j=1,2$, independent of $N$ and $\Delta t$, such that 
\begin{eqnarray*}
\bar{q}^{1/2} \left| \frac{1}{N} \hat{\bf e}_0^H {\bf e}_{N,1} \right| &\leq& \Delta t^4  C_{11}
+  \Delta t^5  C_{12}, \\
\left| \frac{1}{N}\hat{\bf e}_0^H {\bf e}_{N,2} \right| &\leq& \Delta t^3  C_{21}
+  \Delta t^4  C_{22},
\end{eqnarray*}
and for $\omega=-N/2+1,\ldots,-1,1,\ldots,N/2$, by Taylor expansion of the sines and cosines in $\hat{\bf e}_\omega^H {\bf e}_{N,1}$
and $\hat{\bf e}_\omega^H {\bf e}_{N,2}$, we have
\begin{eqnarray*}
(\bar{p}\omega^2+\bar{q})^{1/2} \left| \frac{1}{N}\hat{\bf e}_\omega^H {\bf e}_{N,1} \right| &\leq& \Delta t^4  \frac{C_{11}}{|\omega|^{m-2}}
+  \Delta t^5  \frac{C_{12}}{|\omega|^{m-3}}, \\
\left| \frac{1}{N}\hat{\bf e}_\omega^H {\bf e}_{N,2} \right| &\leq& \Delta t^3  \frac{C_{21}}{|\omega|^{m-1}}
+  \Delta t^4  \frac{C_{22}}{|\omega|^{m-2}}.
\end{eqnarray*}
It is important to note that because the leading coefficient $p(x)$ of $L$ is constant,
$(L_N - l_{2,\omega}I)\hat{\bf e}_\omega = \tilde{\bf q}_N\circ\hat{\bf e}_\omega$,
where $\circ$ denotes componentwise multiplication.  Therefore, this expression is bounded
independently of $\omega$.

Finally, we obtain
\begin{eqnarray*}
\|E_2(\Delta t,\Delta x)\|_{C} & \leq & \frac{\Delta t^3}{\sqrt{2\pi}} \left[ 
C_{11}^2 \Delta t^2 \left( 1 + 2\sum_{\omega=1}^\infty \omega^{4-2m} \right) +
2C_{11}C_{12} \Delta t^3 \left( 1 + 2\sum_{\omega=1}^\infty \omega^{5-2m} \right) + \right.\\&&\left.
C_{12}^2 \Delta t^4 \left( 1 + 2\sum_{\omega=1}^\infty \omega^{6-2m} \right) +
C_{21}^2 \left( 1 + 2\sum_{\omega=1}^\infty \omega^{2-2m} \right) + \right.\\&&\left.
2C_{21}C_{22} \Delta t \left( 1 + 2\sum_{\omega=1}^\infty \omega^{3-2m} \right) +
C_{22}^2 \Delta t^2 \left( 1 + 2\sum_{\omega=1}^\infty \omega^{4-2m} \right) 
\right]^{1/2} \\
& \leq & C_2\Delta t^3.
\end{eqnarray*}
Since $m \geq 4$, it follows that all of the summations over $\omega$ converge
to a sum that can be bounded independently of $N$.
We conclude that the constant $C_2$ is independent of $\Delta x$ and $\Delta t$.
$\Box$

\subsection{Convergence}

Now we can prove that the second-order KSS method converges
for the problem (\ref{pde}), (\ref{ICs}), (\ref{BCs}), (\ref{eqLform}) in the case 
of $p(x)$ being constant and $q(x)$ bandlimited.
We say that a method is convergent of order $(m,n)$ if there exist constants
$C_t$ and $C_x$, independent of the time step $\Delta t$ and grid spacing $\Delta x= 2\pi/N$, such that
$$\|{\bf u}(\cdot,t) - {\bf u}_N(\cdot,t)\|_{C} \leq C_t \Delta t^m + C_x \Delta x^n, \quad 0 \leq t \leq T,$$
where ${\bf u}(x,t)$ is the exact solution and ${\bf u}_N(x,t)$ is the approximate solution computed using
an $N$-point grid.

\begin{thm} \label{thmconv}
Under the assumptions of Theorem \ref{thmconsist},
the two-node block KSS method with prescribed nodes (\ref{eqinterpw}) is convergent of order $(2,m-1)$.
\end{thm}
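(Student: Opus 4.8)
The plan is to follow the classical Lax-equivalence paradigm: since consistency is established in Theorem \ref{thmconsist} and stability in Corollary \ref{corstab}, I would combine the two through a discrete Gr\"onwall argument to bound the accumulated global error. First I would set up the error recursion. Let ${\bf v}^n = \exp[\tilde{L}\,n\Delta t]{\bf f}$ denote the exact solution at time $t_n = n\Delta t$, and let ${\bf u}_N^n$ denote the $n$-step KSS approximation, initialized with ${\cal I}_N{\bf f}$, so that ${\bf u}_N^{n+1} = \tilde{S}_N(\Delta t){\bf u}_N^n$ and ${\bf e}^0 = {\bf 0}$. Defining the global error ${\bf e}^n = {\cal I}_N{\bf v}^n - {\bf u}_N^n$, I would add and subtract $\tilde{S}_N(\Delta t){\cal I}_N{\bf v}^n$ and use the identity ${\cal R}_N{\cal T}_N = I_N$, which gives $\tilde{S}_N(\Delta t){\cal I}_N = \tilde{S}_N(\Delta t)$, to write
$${\bf e}^{n+1} = \left[ {\cal I}_N\exp[\tilde{L}\Delta t] - \tilde{S}_N(\Delta t)\right]{\bf v}^n + \tilde{S}_N(\Delta t){\bf e}^n.$$

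Next I would bound each piece. The bracketed term is exactly the one-step (local truncation) operator analyzed in Theorem \ref{thmconsist}, now applied to ${\bf v}^n$ in place of ${\bf f}$; invoking that estimate gives $\left\|\left[ {\cal I}_N\exp[\tilde{L}\Delta t] - \tilde{S}_N(\Delta t)\right]{\bf v}^n\right\|_C \leq \Delta t\left(C_1\Delta x^{m-1} + C_2\Delta t^2\right)$. For the second piece, I would exploit that $\tilde{S}_N(\Delta t) = {\cal T}_N S_N(\Delta t){\cal R}_N$ acts on trigonometric polynomials as a conjugate of $S_N(\Delta t)$, and that the $C$-norm of a trigonometric polynomial equals, up to the Parseval constant, the $C_N$-norm of its grid values, so that $\|\tilde{S}_N(\Delta t){\bf e}^n\|_C \leq \|S_N(\Delta t)\|_{C_N}\|{\bf e}^n\|_C$. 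The stability bound $\|S_N(\Delta t)\|_{C_N} \leq 1 + C_q\|\tilde{q}\|_\infty\Delta t$ from Corollary \ref{corstab} then controls the amplification factor.

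Finally I would unroll the recursion. With ${\bf e}^0 = {\bf 0}$ and $K = C_q\|\tilde{q}\|_\infty$, the telescoping estimate yields
$$\|{\bf e}^n\|_C \leq \sum_{j=0}^{n-1}(1+K\Delta t)^{\,n-1-j}\,\Delta t\left(C_1\Delta x^{m-1} + C_2\Delta t^2\right) \leq t_n\,e^{KT}\left(C_1\Delta x^{m-1} + C_2\Delta t^2\right),$$
using $(1+K\Delta t)^n \leq e^{Kt_n} \leq e^{KT}$ and $n\Delta t = t_n \leq T$. Adding the interpolation error $\|(I-{\cal I}_N){\bf v}^n\|_C \leq C_T\Delta x^m$ from (\ref{eqsolreg}) through the triangle inequality gives $\|{\bf u}(\cdot,t_n) - {\bf u}_N(\cdot,t_n)\|_C \leq C_x\Delta x^{m-1} + C_t\Delta t^2$, which is precisely convergence of order $(2,m-1)$.

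I expect the main obstacle to be the uniformity of the consistency constants along the discrete trajectory: the estimate of Theorem \ref{thmconsist} was proved for the data ${\bf f}$, but the recursion requires it with ${\bf f}$ replaced by ${\bf v}^n$ for every $n$, and with $C_1$, $C_2$ independent of $n$. This rests on the regularity-preservation statement ${\bf v}\in L^2(0,T,H_p^{m+1}\times H_p^m)$ used in that proof, which bounds the relevant Sobolev norms of ${\bf v}^n$ uniformly over $[0,T]$; I would need to confirm that these time-uniform norm bounds are exactly the quantities entering $C_1$ and $C_2$, so that the per-step local truncation error is genuinely $O\!\left(\Delta t(\Delta x^{m-1}+\Delta t^2)\right)$ at each step rather than growing with $n$.
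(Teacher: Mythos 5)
Your proposal is correct and follows essentially the same route as the paper: the same telescoping error recursion obtained by adding and subtracting $\tilde{S}_N(\Delta t)$ applied to the exact solution at $t_n$, the same invocation of Theorem \ref{thmconsist} for the one-step defect and Corollary \ref{corstab} for the amplification factor, the same discrete Gr\"onwall unrolling, and the same final triangle inequality absorbing the interpolation error $\|(I-{\cal I}_N){\bf v}^n\|_C$. Your closing remark about the uniformity of the consistency constants along the trajectory is a legitimate point that the paper handles only implicitly through the regularity-preservation statement preceding (\ref{eqsolreg}).
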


{\em Proof:} We recall that $S(\Delta t)$ from (\ref{P}) is the exact solution operator for the problem (\ref{pde}), (\ref{ICs}), (\ref{BCs}), (\ref{eqLform}).
For any nonnegative integer $n$ and fixed grid size $N$, we define
$$E_n = \|{\cal I}_N S(\Delta t)^n {\bf f} - \tilde{S}_N(\Delta t)^n {\cal I}_N {\bf f}\|_{C}.$$
Then, by Theorem \ref{thmconsist} and Corollary \ref{corstab}, there exist constants
$C_1$, $C_2$ and $C_q$ such that
\begin{eqnarray*}
E_{n+1} & = & \|{\cal I}_N S(\Delta t)^{n+1} {\bf f} - \tilde{S}_N(\Delta t)^{n+1} {\cal I}_N {\bf f}\|_{C} \\
& = & \|{\cal I}_N S(\Delta t)S(\Delta t)^n {\bf f} - 
\tilde{S}_N(\Delta t)\tilde{S}_N(\Delta t)^n {\cal I}_N {\bf f}\|_{C} \\
& = &  \|{\cal I}_N S(\Delta t)S(\Delta t)^n {\bf f} - \tilde{S}_N(\Delta t)S(\Delta t)^n {\bf f} + 
\tilde{S}_N(\Delta t)S(\Delta t)^n {\bf f} - \tilde{S}_N(\Delta t)\tilde{S}_N(\Delta t)^n  {\cal I}_N {\bf f}\|_{C} \\
& \leq &  \|{\cal I}_N S(\Delta t)S(\Delta t)^n {\bf f} - \tilde{S}_N(\Delta t)S(\Delta t)^n {\bf f}\|_{C} + 
\|\tilde{S}_N(\Delta t)[{\cal I}_N S(\Delta t)^n {\bf f} - \tilde{S}_N(\Delta t)^n {\cal I}_N {\bf f}]\|_{C} \\
& \leq &  \|{\cal I}_N S(\Delta t)S(\Delta t)^n {\bf f} - \tilde{S}_N(\Delta t)S(\Delta t)^n {\bf f}\|_{C} + 
\|{\cal T}_N S_N(\Delta t){\cal R}_N [{\cal I}_N S(\Delta t)^n {\bf f} - 
\tilde{S}_N(\Delta t)^n {\cal I}_N {\bf f}]\|_{C} \\
& \leq &  \|{\cal I}_N S(\Delta t)S(\Delta t)^n {\bf f} - \tilde{S}_N(\Delta t)S(\Delta t)^n {\bf f}\|_{C} + 
\|{\cal T}_N S_N(\Delta t)\hat{\cal R}_N [{\cal I}_N S(\Delta t)^n {\bf f} - 
\tilde{S}_N(\Delta t)^n {\cal I}_N {\bf f}]\|_{C} \\
& \leq &  \|{\cal I}_N S(\Delta t){\bf u}(\cdot,t_n) - \tilde{S}_N(\Delta t){\cal I}_N {\bf u}(\cdot,t_n)\|_{C} + 
\|{S}_N(\Delta t)\|_{C_N} E_n \\
& \leq & C_1\Delta t^3 + C_2 \Delta t \Delta x^{m-1} + (1+C_q\|\tilde{q}\|_\infty\Delta t) E_n.
\end{eqnarray*}
It follows that
$$E_n \leq \frac{e^{C_q\|\tilde{q}\|_\infty T} - 1}{1+{C_q\|\tilde{q}\|_\infty \Delta t} - 1}(C_1\Delta t^3 + C_2 \Delta t\Delta x^{m-1})
\leq \tilde{C}_1\Delta t^2 + \tilde{C}_2\Delta x^{m-1}$$
for some constants $\tilde{C}_1$ and $\tilde{C}_2$ that depend only on $T$.
We conclude that
\begin{eqnarray*}
\|{\bf u}(\cdot,t_n) - {\bf u}_N(\cdot,t_n)\|_{C} & \leq & 
\|{\cal I}_N {\bf u}(\cdot,t_n) - {\bf u}_N(\cdot,t_n)\|_{C} + \|(I - {\cal I}_N){\bf u}(\cdot,t_n)\|_{C} \\
& \leq & \tilde{C}_1\Delta t^2 + \tilde{C}_2\Delta x^{m-1} + \tilde{C}_3 \Delta x^m.
\end{eqnarray*}
$\Box$

\section{Numerical Experiments}

We now perform some numerical experiments to corroborate the theory presented
in Section \ref{secconv}.
For each test case, relative error was estimated using the $\ell_2$ norm, in comparison
to a reference solution computed by the {\sc Matlab} ODE solver {\tt ode15s} \cite{shampine}, with
absolute and relative tolerances set to $10^{-12}$.

\subsection{Constant Leading Coefficient}

We consider the initial value problem
\begin{equation} \label{eqnumexpPDE}
u_{tt} + Lu = 0, \quad 0 < x < 2\pi, \quad t > 0,
\end{equation}
where $L$ is of the form (\ref{eqLform}), with
\begin{eqnarray}
p(x) &=& 1, \label{eqnumexpP} \\
q(x)&=&1+\frac{1}{2}\sin x+\frac{1}{4}\cos 2x +\frac{1}{8}\sin 3x. \label{eqnumexpQ}
\end{eqnarray}
The initial conditions are
\begin{eqnarray}
u(x,0) &=& \left\{
\begin{array}{ll}
1-\frac{2}{\pi}|x-\pi| & \pi/2 \leq x \leq 3\pi/2,\\
0 &  0 \leq x < \pi/2. \quad 3\pi/2 < x < 2\pi,
\end{array}
\right. \label{eqnumexpF} \\
u_t(x,0) &=& 0, \quad 0 < x < 2\pi, \label{eqnumexpG}
\end{eqnarray}
and we impose periodic boundary conditions.  We note that the initial data
belongs to $H_p^{m+1}([0,2\pi]) \times H_p^m([0,2\pi])$ for $m=1$, which is not
sufficiently regular to satisfy the assumptions of Theorem \ref{thmconsist}.

The results are shown in Table \ref{tabT2}.  As predicted by Theorem \ref{thmconsist}
and Corollary \ref{corstab}, we observe second-order accuracy in time, in spite of the low
regularity of the initial data,
even when the CFL limit is exceeded by using the same time step as the spatial
resolution increases.
\begin{table}[ht]
\caption{Relative errors in the solution of
(\ref{eqnumexpPDE}), (\ref{eqnumexpP}), (\ref{eqnumexpQ}), (\ref{eqnumexpF}), (\ref{eqnumexpG})
with periodic boundary conditions
on the domain $(0,2\pi)\times(0,10)$,
using the second-order KSS method described in Section \ref{secconv}, with $N$ grid points
and time step $\Delta t$.}
\begin{center}
\footnotesize
\begin{tabular}{|l|r|r|r|r|} \hline
$\Delta t$ & $N=256$ & $N=512$ & $N=1024$ & $N=2048$ \\ \hline
$\pi/128$ & 1.38e-04 & 1.33e-04 & 1.30e-04 & 1.29e-04 \\ 
$\pi/256$ & 3.32e-05 & 3.24e-05 & 3.25e-05  & 3.20e-05\\
$\pi/512$ & 8.04e-06 & 8.49e-06 & 8.27e-06  & 8.08e-06 \\ \hline
\end{tabular}
\normalsize
\end{center}
\label{tabT2}
\end{table}

\subsection{Variable Leading Coefficient}

We now solve the problem (\ref{eqnumexpPDE}), with 
\begin{equation}
p(x) = 1-\frac{1}{2}\sin x+\frac{1}{4}\cos 2x \label{eqnumexpP2}
\end{equation}
and initial conditions
\begin{equation}
u(x,0) = e^{-(x-\pi)^2}, \quad u_t(x,0) = 0, \quad 0 < x < 2\pi. \label{eqnumexpFG}
\end{equation}
The results are shown in Figures \ref{figinstability} and \ref{figstability}.
We see that when the CFL number is greater than one, the method is unstable,
as high-frequency components quickly become the dominant terms of the solution,
and their amplitudes grow without bound.  On the other hand, when the CFL number
is less than one, the solution is well-behaved.

\begin{figure}[ht]
\begin{center}
\includegraphics[width=2.5in]{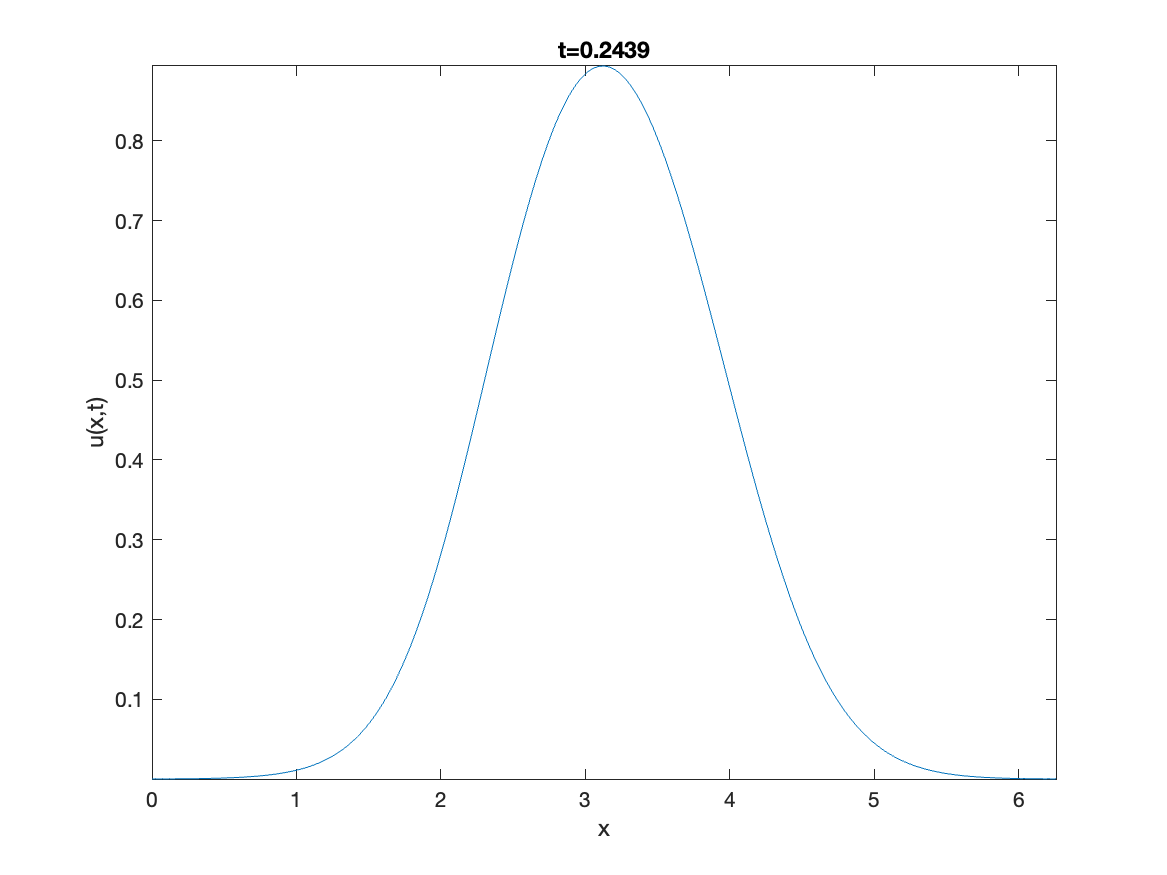}
\includegraphics[width=2.5in]{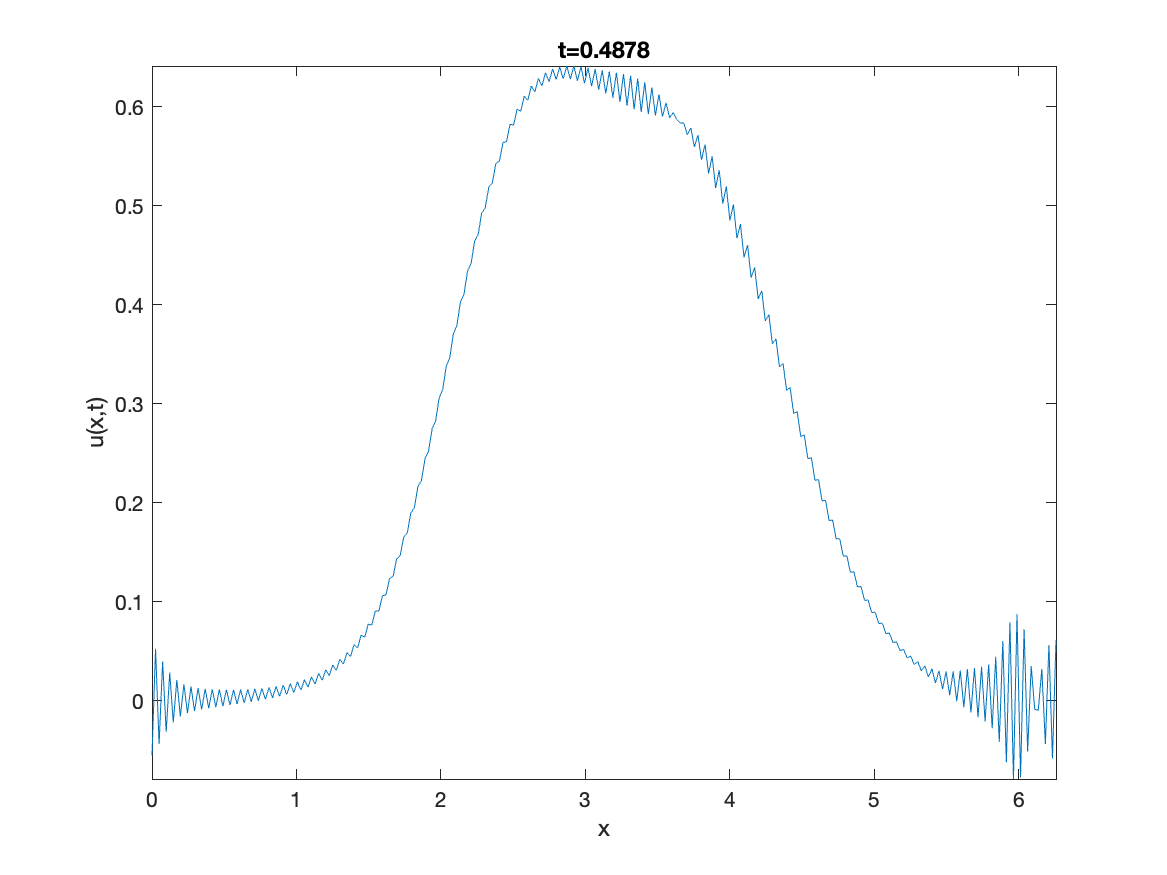} \\
\includegraphics[width=2.5in]{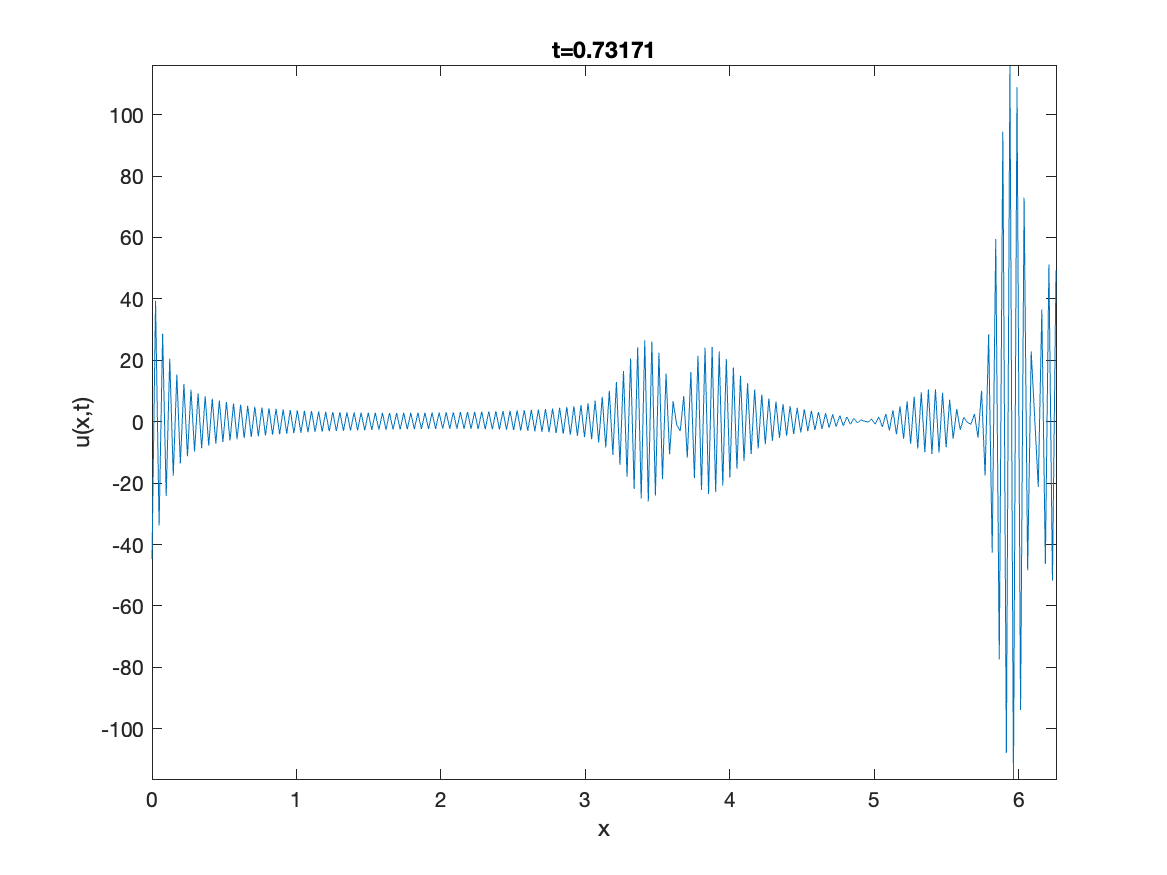}
\includegraphics[width=2.5in]{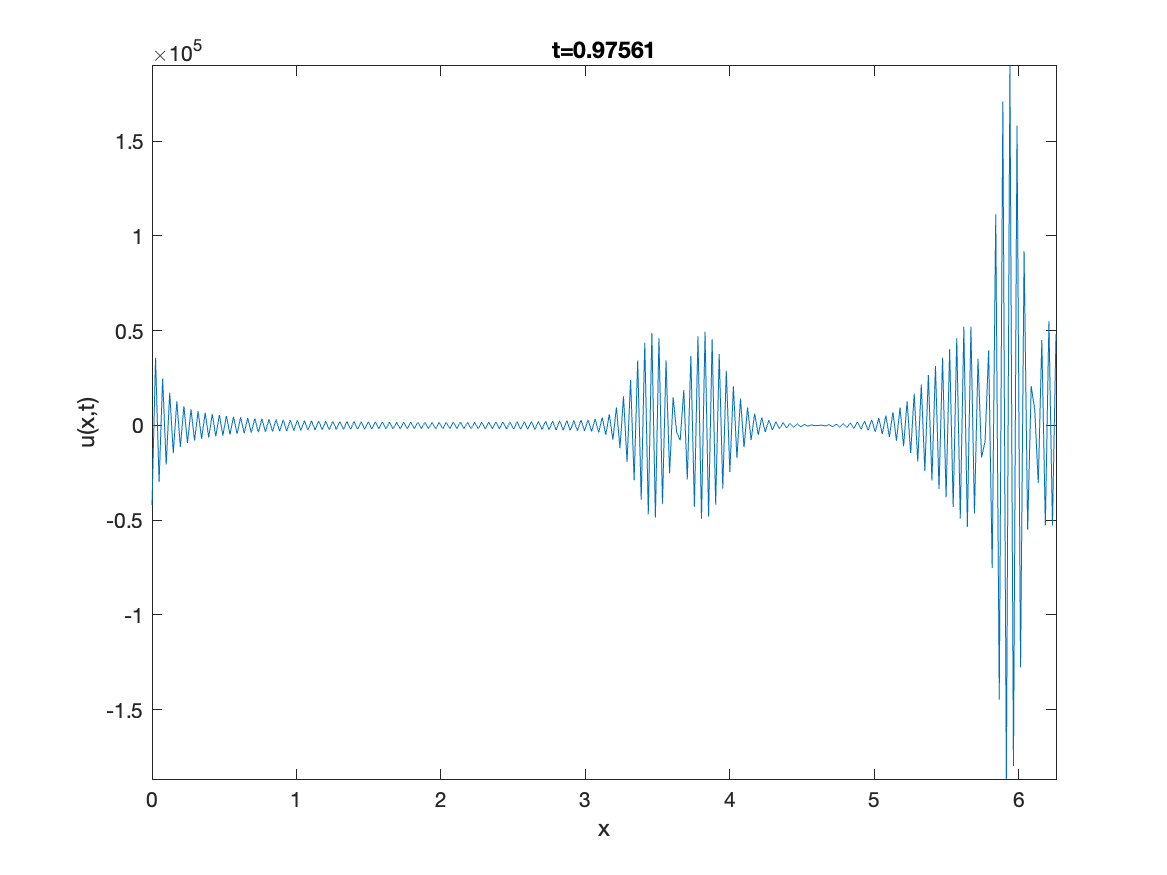}
\end{center}
\caption{Solutions of (\ref{eqnumexpPDE}), (\ref{eqnumexpP2}),  (\ref{eqnumexpQ}), (\ref{eqnumexpFG})
on the domain $(0,2\pi)\times(0,1)$, using the second-order KSS method described in
Section \ref{secconv}, with $N=256$ grid points and time step CFL number $\approx 1.74$.}
\label{figinstability}
\end{figure}

\begin{figure}[ht]
\begin{center}
\includegraphics[width=2.5in]{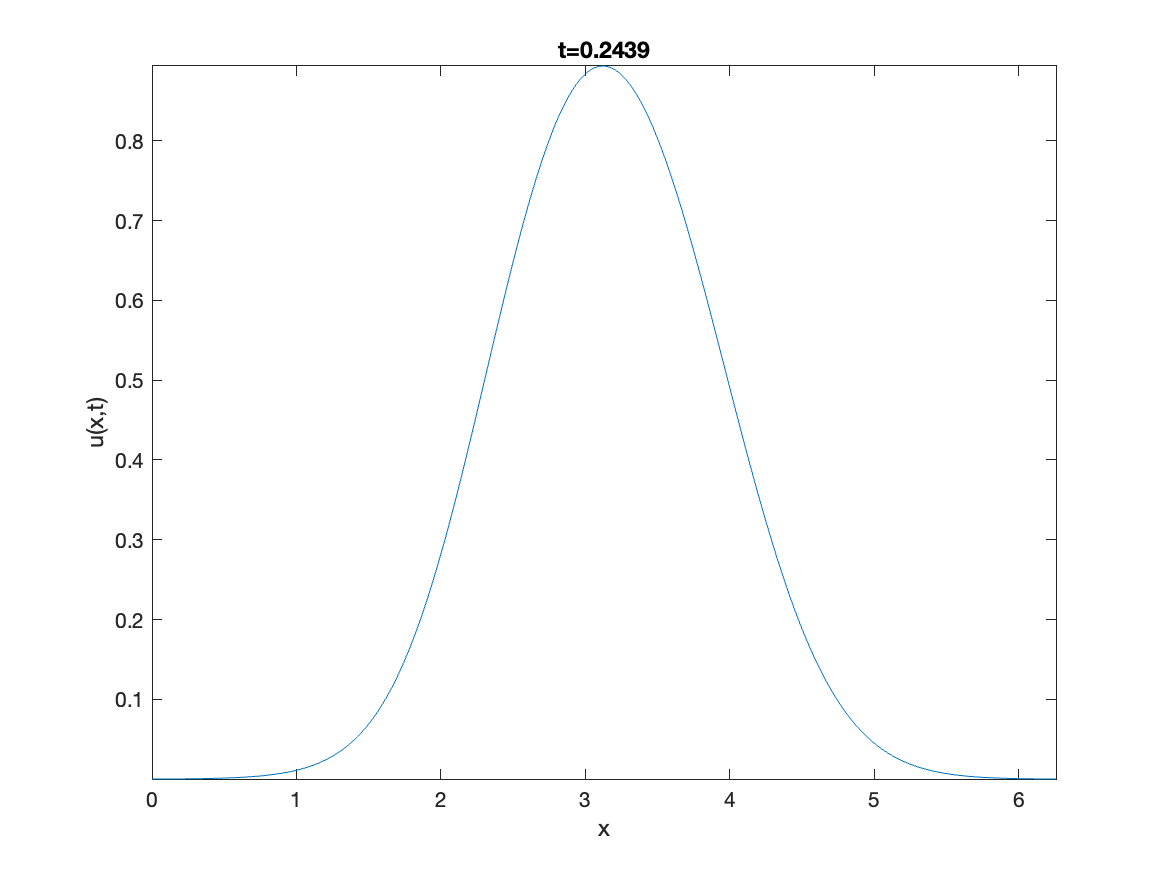}
\includegraphics[width=2.5in]{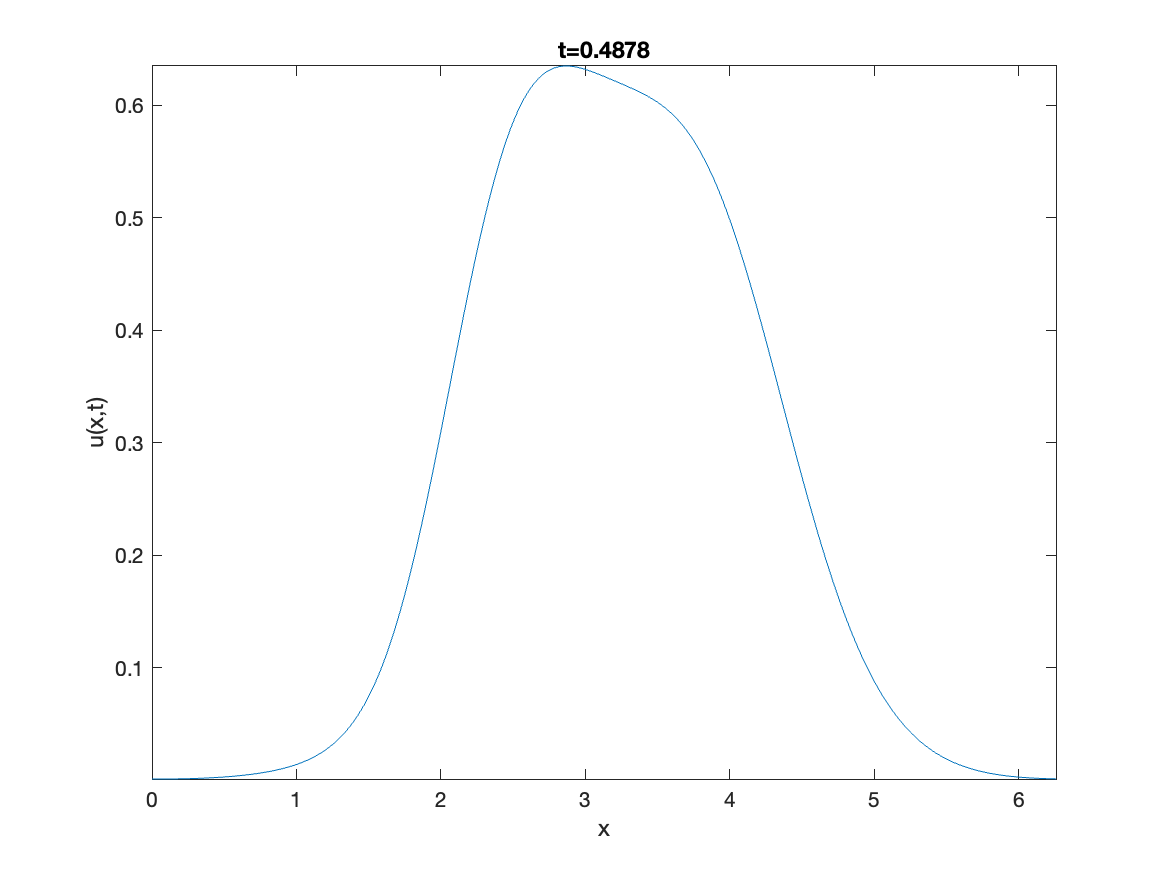} \\
\includegraphics[width=2.5in]{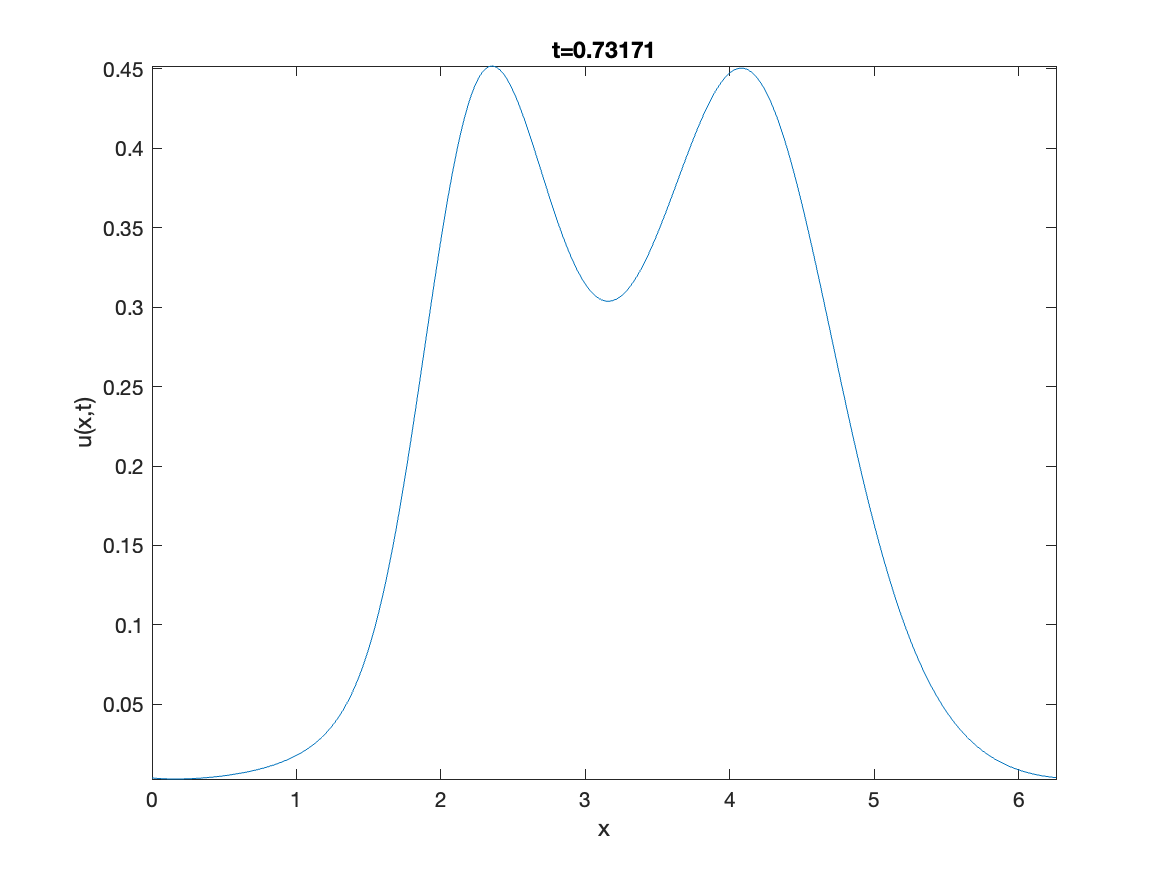}
\includegraphics[width=2.5in]{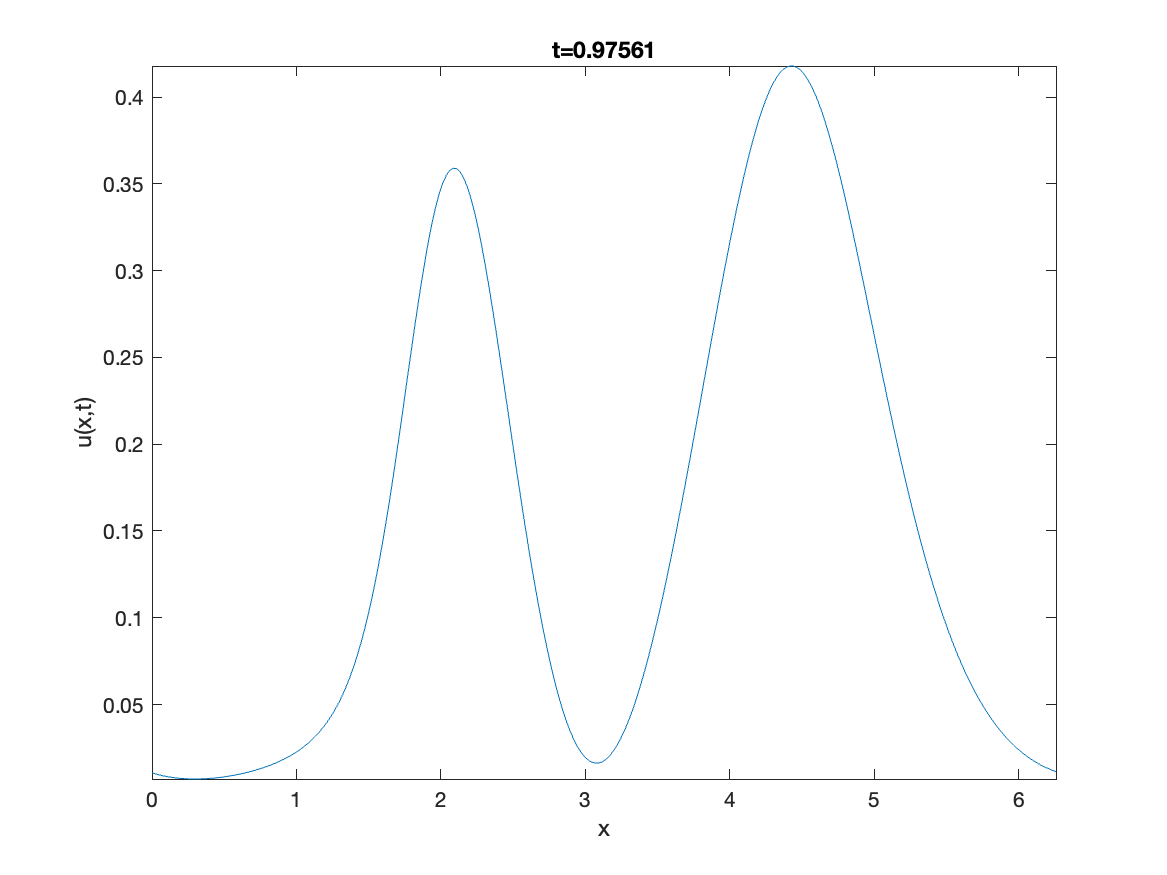}
\end{center}
\caption{Solutions of (\ref{eqnumexpPDE}), (\ref{eqnumexpP2}),  (\ref{eqnumexpQ}), (\ref{eqnumexpFG})
on the domain $(0,2\pi)\times(0,1)$, using the second-order KSS method described in
Section \ref{secconv}, with $N=256$ grid points and time step CFL number $\approx 0.87$.}
\label{figstability}
\end{figure}


The results of the experiments illustrated in Figures \ref{figinstability} and \ref{figstability}
are summarized in Table \ref{tabprob5}.  As $\Delta t$ is further decreased, the error
in the second-order KSS method decreases as $O(\Delta t^2)$.  The problem is also solved
with {\tt ode15s}, with its {\tt MaxStep} and {\tt InitialStep} parameters set to the value of each
time step $\Delta t$ used with KSS, to examine the behavior of the error as the maximum
time step approaches zero.  It is worth noting that {\tt ode15s} employs adaptive time-stepping,
while this implementation of KSS does not; adaptive time-stepping for KSS methods was
investigated in \cite{dozier}.  We observe that regardless of the maximum time step, {\tt ode15s}
produces a solution that is slightly more accurate than that of KSS, but KSS is significantly more 
efficient, as long as the (fixed) time step is chosen sufficiently small.

\begin{table}[ht]
\caption{}
\label{tabprob5}
\begin{center}
\footnotesize
\begin{tabular}{|c|r|r|r|r|r|} \hline
\multicolumn{2}{|c|}{} & \multicolumn{2}{|c|}{KSS} & \multicolumn{2}{|c|}{\tt ode15s} \\ \hline
$N$ & $\Delta t$ & error & time & error & time \\ \hline
 & $\pi/128$ &  -- & -- &  1.530e-05 & 1.284\\
256 & $\pi/256$ &  8.610e-05 & 0.006 &  1.282e-05 & 1.102\\
& $\pi/512$ &  1.941e-05 & 0.012 &  1.431e-05 & 1.099\\ \hline
\end{tabular}
\normalsize
\end{center}
\end{table}

\subsection{Generalizations}

In this paper, we have limited our analysis to the wave equation in one space dimension,
with periodic boundary conditions, and spatial differentiation performed using the FFT.
We now consider some variations of this problem, to investigate whether our conclusions
may apply more broadly.

\subsubsection{Finite Differencing in Space} \label{secFD}

We solve the problem (\ref{eqnumexpPDE}), (\ref{eqnumexpP}), (\ref{eqnumexpQ}),
(\ref{eqnumexpFG}), on the domain $(0,2\pi)\times(0,10)$,
with periodic boundary conditions, and using centered differencing
in space.  Because of the change of spatial discretization, we modify the interpolation
points from (\ref{eqinterpw}) by prescribing
\begin{equation}
\label{eqinterpwfd}
l_{2,\omega} = \overline{p} \frac{2-2\cos(\omega\Delta x)}{\Delta x^2} + \overline{q}, 
\quad \omega=-N/2+1,\ldots,N/2.
\end{equation}
The results are shown in Table \ref{tabT6}.  It can be seen that the same
unconditional stability that was established for spectral differentiation applies
in the case of finite differencing, as an accurate solution is obtained even when the
CFL number is as large as eight.

\begin{table}[ht]
\caption{Relative errors in the solution of
(\ref{eqnumexpPDE}), (\ref{eqnumexpP}), (\ref{eqnumexpQ}), (\ref{eqnumexpFG})
with periodic boundary conditions
on the domain $(0,2\pi)\times(0,10)$,
using the second-order KSS method described in Section \ref{secconv}, with $N$ grid points,
time step $\Delta t$, central differencing in space, and interpolation points
(\ref{eqinterpwfd}).}
\begin{center}
\footnotesize
\begin{tabular}{|l|r|r|r|r|} \hline
$\Delta t$ & $N=256$ & $N=512$ & $N=1024$ & $N=2048$ \\ \hline
$\pi/128$ & 1.00e-04 & 1.00e-04 & 1.00e-04 & 1.00e-04 \\ 
$\pi/256$ & 2.47e-05 & 2.48e-05 & 2.48e-05  & 2.47e-05\\
$\pi/512$ & 6.15e-06 & 6.16e-06 & 6.15e-06  & 6.15e-06 \\ \hline
\end{tabular}
\normalsize
\end{center}
\label{tabT6}
\end{table}

\subsubsection{Other Boundary Conditions} \label{secotherBC}

We repeat the problem from Section \ref{secFD}, except with homogeneous
Dirichlet boundary conditions.  The interpolation points from (\ref{eqinterpw}) are 
modified as follows:
\begin{equation}
\label{eqinterpwdir}
l_{2,\omega} = \overline{p} \frac{2-2\cos(\omega\Delta x/2)}{\Delta x^2} + \overline{q}, \quad
\omega = 0, 1, 2, \ldots, N-1.
\end{equation}
As in the case of periodic boundary conditions, unconditional stability is indicated
by the results, shown in Table \ref{tabT7}.

\begin{table}[ht]
\caption{Relative errors in the solution of
(\ref{eqnumexpPDE}), (\ref{eqnumexpP}), (\ref{eqnumexpQ}), (\ref{eqnumexpFG})
with homogeneous Dirichlet boundary conditions
on the domain $(0,2\pi)\times(0,10)$,
using the second-order KSS method described in Section \ref{secconv}, with $N$ grid points,
time step $\Delta t$, central differencing in space, and interpolation points
(\ref{eqinterpwdir}).}
\begin{center}
\footnotesize
\begin{tabular}{|l|r|r|r|r|} \hline
$\Delta t$ & $N=256$ & $N=512$ & $N=1024$ & $N=2048$ \\ \hline
$\pi/128$ & 1.53e-04 & 1.53e-04 & 1.53e-04 & 1.53e-04 \\ 
$\pi/256$ & 3.85e-05 & 3.85e-05 & 3.85e-05  & 3.86e-05\\
$\pi/512$ & 9.67e-06 & 9.67e-06 & 9.67e-06  & 9.68e-06 \\ \hline
\end{tabular}
\normalsize
\end{center}
\label{tabT7}
\end{table}

\subsubsection{Higher Space Dimension}

We solve a two-dimensional wave equation
\begin{equation} \label{eqnumexpPDE2D}
u_{tt} + Lu = 0, \quad 0 < x, y < 2\pi, \quad 0 < t < 10,
\end{equation}
where the differential operator $L$ is defined by
\begin{equation} \label{eqnumexpL2D}
Lu = -\Delta u + q(x,y)u,
\end{equation}
where
\begin{equation}
\label{eqnumexpQ2D}
q(x,y) = 1+ \frac{1}{2}\sin x\cos y+\frac{1}{4}\cos 2y +\frac{1}{8}\sin 3x.
\end{equation}
Our initial conditions are
\begin{equation} \label{eqnumexpFG2D}
u(x,y,0) = e^{-(x-\pi)^2+(y-\pi)^2}, \quad u_t(x,y,0) = 0, \quad 0 < x,y < 2\pi.
\end{equation}
and we impose periodic boundary conditions.
For spatial discretization, we use a $N\times N$ grid with spacing $\Delta x = \Delta y = 2\pi/N$,
and centered differencing in space.  This leads to the choice of interpolation points
\begin{equation} \label{eqinterpw2D}
l_{1,\omega_1,\omega_2} = 0, \quad l_{2,\omega_1,\omega_2}
= \frac{1}{\Delta x^2} [4  - 2\cos(\omega_1\Delta x) - 2\cos(\omega_2\Delta y)] + \overline{q},
\end{equation}
for $\omega_1,\omega_2 = -N/2+1,\ldots,N/2$, where $\overline{q}$ is the average value of $q(x,y)$
on $(0,2\pi)^2$.  The results, shown in Table \ref{tabT8}, indicate that unconditional stability
again holds, as the CFL number exceeds one without loss of accuracy or stability.

\begin{table}[ht]
\caption{Relative errors in the solution of
(\ref{eqnumexpPDE2D}), (\ref{eqnumexpQ2D}), (\ref{eqnumexpFG2D})
with periodic boundary conditions
on the domain $(0,2\pi)^2\times(0,10)$,
using the second-order KSS method described in Section \ref{secconv}, with $N$ grid points
per dimension,
time step $\Delta t$, central differencing in space, and interpolation points
(\ref{eqinterpw2D}).}
\begin{center}
\footnotesize
\begin{tabular}{|l|r|r|r|r|} \hline
$\Delta t$ & $N=16$ & $N=32$ & $N=64$ & $N=128$ \\ \hline
$\pi/8$ & 3.83e-02 & 4.14e-02 & 3.95e-02 & 3.84e-02 \\ 
$\pi/16$ & 8.69e-03 & 8.80e-03 & 8.30e-03  & 8.06e-03\\
$\pi/32$ & 2.01e-03 & 1.95e-03 & 1.84e-03  & 1.78e-03 \\ \hline
\end{tabular}
\normalsize
\end{center}
\label{tabT8}
\end{table}

\FloatBarrier

\subsection{Non-Bandlimited Coefficients}

We now carry out further investigation of the performance of KSS on problems beyond
those considered in the convergence analysis from Section \ref{secconv}.

We consider the problem (\ref{eqnumexpPDE}), (\ref{eqnumexpP}), 
(\ref{eqnumexpF}), (\ref{eqnumexpG}), with periodic boundary conditions, first with
\begin{equation} \label{eqnumexpQ1}
q(x) = \left\{ \begin{array}{ll} 1+\frac{1}{2}\sin x & 0 \leq x < \pi, \\
1-\frac{1}{2}\sin 2x & \pi \leq x < 2\pi,
\end{array} \right.
\end{equation}
which is constructed to as to be continuous but not differentiable at $x=\pi$, and then with
\begin{equation} \label{eqnumexpQ0}
q(x) = \left\{ \begin{array}{ll} 3/2 & 0 \leq x < \pi, \\
1/2 & \pi \leq x < 2\pi.
\end{array} \right.
\end{equation}
As shown in Tables \ref{tabprob10} and \ref{tabprob9}, the KSS method maintains stability and second-order accuracy in time.
\begin{table}[ht]
\caption{Relative errors in the solution of
(\ref{eqnumexpPDE}), (\ref{eqnumexpP}), (\ref{eqnumexpQ1}), (\ref{eqnumexpF}), (\ref{eqnumexpG})
with periodic boundary conditions
on the domain $(0,2\pi)\times(0,1)$,
using the second-order KSS method described in Section \ref{secconv}, with $N$ grid points,
time step $\Delta t$, and central differencing in space.}
\label{tabprob10}
\begin{center}
\footnotesize
\begin{tabular}{|c|r|r|r|r|} \hline
$\Delta t$ & $N=256$ & $N=512$ & $N=1024$ & $N=2048$ \\ \hline
$\pi/128$ & 5.331e-05 & 5.452e-05 & 5.342e-05 & 5.220e-05 \\ 
$\pi/256$ & 1.297e-05 & 1.336e-05 & 1.393e-05 & 1.381e-05 \\ 
$\pi/512$ & 3.219e-06 & 3.396e-06 & 3.597e-06 & 3.531e-06 \\  \hline
\end{tabular}
\normalsize
\end{center}
\end{table}
\begin{table}[ht]
\caption{Relative errors in the solution of
(\ref{eqnumexpPDE}), (\ref{eqnumexpP}), (\ref{eqnumexpQ0}), (\ref{eqnumexpF}), (\ref{eqnumexpG})
with periodic boundary conditions
on the domain $(0,2\pi)\times(0,1)$,
using the second-order KSS method described in Section \ref{secconv}, with $N$ grid points,
time step $\Delta t$, and central differencing in space.}
\label{tabprob9}
\begin{center}
\footnotesize
\begin{tabular}{|c|r|r|r|r|} \hline
$\Delta t$ & $N=256$ & $N=512$ & $N=1024$ & $N=2048$ \\ \hline
$\pi/128$ & 5.891e-05 & 6.005e-05 & 5.917e-05 & 5.783e-05 \\ 
$\pi/256$ & 1.417e-05 & 1.464e-05 & 1.533e-05 & 1.516e-05 \\ 
$\pi/512$ & 3.574e-06 & 3.758e-06 & 3.924e-06 & 3.870e-06 \\ \hline
\end{tabular}
\normalsize
\end{center}
\end{table}
Based on these numerical results, we seek to strengthen the result of Corollary \ref{corstab} by weakening
the assumption about the regularity of $q(x)$.
\begin{thm} \label{thm33new}
Assume $p(x)\equiv\textrm{constant}$, $q(x)$ is $2\pi$-periodic, and $q''(x)$ is piecewise $C^1$. Then the solution operator $S_N(\Delta t)$ satisfies 
\begin{eqnarray} \label{eqthm41}
\|S_N(\Delta t)\|_{C_N}\leq1+C_q\|\tilde{q}\|_\infty \Delta t,
\end{eqnarray}
where the constant $C_q$ is independent of $N$ and $\Delta t$.
\end{thm}

\begin{proof} The proof begins as in that of Theorem \ref{thm33} and its supporting lemmas.
We then have
\begin{eqnarray}
\|G_{11}\|_\infty
&\leq&\max\limits_{j\in\hat{I}_N} 1+ \frac{3}{2}\sum_{k\in\hat{I}_N\setminus j} \left| \hat q(k-j) (\bar{p}j^2+\bar{q})^{-1/2}(\bar{p}k^2+\bar{q})^{1/2} \Delta t^2 \right|   +  \nonumber \\
& & \frac{3}{2}\sum_{k\in\hat{I}_N\setminus j} \left| \hat q(k-j) (\bar{p}j^2+\bar{q})^{1/2}(\bar{p}k^2+\bar{q})^{-1/2} \Delta t^2 \right|   +  \nonumber \\
& &  \sum_{k\in\hat{I}_N} \left| (\bar{p}j^2+\bar{q})^{-1/2} (\bar{p}k^2+\bar{q})^{-1/2}\sum_{\omega\in\hat{I}_N\setminus\{k,j\}} |\hat q(\omega-k) \hat q(j-\omega)|\Delta t^2  \right|. \label{eqthm41G11}
\end{eqnarray}
By the assumptions on $q(x)$, it follows from \cite[Theorem A.1.3]{pdetxt} that there exists a constant
$C_q$ such that
$$|\hat{q}(\omega)| \leq \frac{C_q}{|\omega|^3+1}.$$
Therefore, if we define
$$C_0 = \sup_{\omega\in\mathbb{Z}\setminus\{0\}} \frac{|\hat{q}(\omega)(|\omega|^3+1)|}{\|\tilde{q}\|_\infty},$$
it follows that for $\omega\neq 0$,
\begin{equation} \label{eqqhatbd}
|\hat{q}(\omega)| \leq \frac{C_0 \|\tilde{q}\|_\infty}{|\omega|^3+1}.
\end{equation}
To bound the first summation in (\ref{eqthm41G11}), we consider
\begin{eqnarray}
\sum_{k\in\hat{I}_N\setminus j} \left| \hat q(k-j) \sqrt{\frac{\bar{p}k^2+\bar{q}}{\bar{p}j^2+\bar{q}}} \right|
& \leq & \frac{C_0\|\tilde{q}\|_\infty}{\sqrt{\bar{p}j^2+\bar{q}}} \sum_{k\in\hat{I}_N\setminus j} \frac{\sqrt{\bar{p}k^2+\bar{q}}}{|j-k|^3+1}  \nonumber \\
& \leq & \frac{C_0\|\tilde{q}\|_\infty}{\sqrt{\bar{p}j^2+\bar{q}}} \sum_{k\in\hat{I}_N\setminus j}\frac{\sqrt{\bar{p}}|k|}{|j-k|^3+1} + \frac{\sqrt{\bar{q}}}{|j-k|^3+1}. \label{eqthm41sum1}
\end{eqnarray}
From 
\begin{eqnarray*}
\sum_{k\in\hat{I}_N\setminus j} \frac{|k|}{|j-k|^3+1} & \leq &
\max\{2,|j|\} + \sum_{k\in\hat{I}_N, |k-j|>1} \frac{|k|}{|j-k|^3+1} \\
& \leq & \max\{2,|j|\} + 2 \sum_{u=2}^N \frac{u}{u^3+1} + \frac{|j|}{u^3+1} \\
& \leq & \max\{2,|j|\} + 2 \int_1^\infty u^{-2}  + |j| u^{-3}\,du\\
& \leq & \max\{2,|j|\} + 2 \left( 1 + \frac{|j|}{2} \right),
\end{eqnarray*}
we can conclude that the expression from (\ref{eqthm41sum1}) is bounded independently of $N$. 

Next, we show that the second summation from (\ref{eqthm41G11}),
\begin{eqnarray}
\sum_{k\in\hat{I}_N\setminus j} \left| \hat q(k-j) (\bar{p}j^2+\bar{q})^{1/2}(\bar{p}k^2+\bar{q})^{-1/2} \Delta t^2 \right|, \label{eqG11sum2}
\end{eqnarray}
can also be bounded independently of $N$.  Applying (\ref{eqqhatbd}), we focus on
\begin{eqnarray}
\sum_{k\in\hat{I}_N\setminus j} \left| \hat q(k-j) (\bar{p}k^2+\bar{q})^{-1/2} \right|
& \leq & \sum_{k\in\hat{I}_N\setminus j} \frac{(\bar{p}k^2+\bar{q})^{-1/2}}{|j-k|^3+1}\nonumber \\
& \leq & \frac{1}{(|j|^3 + 1)\bar{q}} + \frac{1}{\sqrt{\bar{p}}} \sum_{k\in\hat{I}_N\setminus \{0,j\}} \frac{1}{|j-k|^3|k|}.\label{eqsum2orig}
\end{eqnarray}
If $j=0$, then we have
$$
\sum_{k\in\hat{I}_N\setminus 0} \frac{1}{ |k|^3 |k|} 
\leq  2\left( \int_{1}^{\infty} \frac{1}{k^4} \,  dk +1 \right) 
 \leq  \frac{8}{3}.
$$
If $j>0$, then we bound the final summation in (\ref{eqsum2orig}) as follows:
\begin{eqnarray*}
\sum_{k=-\frac{N}{2}+1}^{-1} \frac{1}{(j-k)^3(-k)} 
& \leq & \frac{1}{(j+1)^3} + \int_{-\infty}^{-1} \frac{1}{(j-k)^3(-k)}\,dk \\
& \leq & \frac{1}{(j+1)^3} - \frac{3j+2}{2j^2(j+1)^2} + \frac{\ln|j+1|}{j^3},\\
\sum_{k=1}^{j-1} \frac{1}{(j-k)^3 k} 
& \leq & \frac{1}{j-1}+ \frac{1}{(j-1)^3} + \int_{1}^{j-1} \frac{1}{(j-k)^3k} \,dk \\
& \leq & \frac{1}{j-1} + \frac{1}{(j-1)^3}+ \frac{j^3 - 6j + 4}{2j^2(j-1)^2} + \frac{2 \ln |j-1|}{j^3},\\
\sum_{k=j+1}^{N/2} \frac{1}{(k-j)^3 k} 
& \leq & \frac{1}{(j+1)} + \int_{j+1}^{\infty} \frac{1}{(k-j)^3k} \,dk \\
& \leq & \frac{1}{(j+1)} + \frac{\ln |j+1|}{j^3} - \frac{1}{j^2} + \frac{1}{2j}.
\end{eqnarray*}
As all of these portions of (\ref{eqsum2orig}) are $O(j^{-1})$, we find that
(\ref{eqG11sum2}) is bounded independently of $N$.
%

Finally, we consider the third summation from (\ref{eqthm41G11}),
\begin{eqnarray} \label{eq3rdsum}
 \sum_{k\in\hat{I}_N} \left| (\bar{p}j^2+\bar{q})^{-1/2} (\bar{p}k^2+\bar{q})^{-1/2}\sum_{\omega\in\hat{I}_N\setminus\{k,j\}} |\hat q(\omega-k) \hat q(j-\omega)|\Delta t^2  \right|.
\end{eqnarray}
Applying (\ref{eqqhatbd}) to the sum over $\omega$, we then focus on bounding
\begin{equation} \label{eq3rdinnersum}
\sum_{\omega\in\hat{I}_N\setminus\{k,j\}} \frac{1}{|\omega-k|^3|j-\omega|^3}.
\end{equation}
Let $z \equiv k-j>1$, and let $m=\lfloor (j+k)/2 \rfloor$.  We then have
\begin{eqnarray*}
\sum_{\omega\in\hat{I}_N\setminus\{k,j\}} \frac{1}{|\omega-k|^3|j-\omega|^3} \nonumber 
& \leq &   \frac{2}{|j-1-k|^3} + \frac{2}{|j+1-k|^3} + \int_{-\infty}^{j-1} \frac{1}{[(k-\omega)(j-\omega)]^3} \,d\omega + \nonumber \\ & &
\int_{j+1}^{m} \frac{1}{[(\omega-k)(j-\omega)]^3} \,d\omega + \int_{m+1}^{k-1} \frac{1}{[(\omega-k)(j-\omega)]^3} \,d\omega + \nonumber \\ & &
\int_{k+1}^{\infty} \frac{1}{[(\omega-k)(\omega-j)]^3} \,d\omega     \\
 & \leq & \frac{2}{|z+1|^3} + \frac{2}{|z-1|^3} + 
 \frac{2z^2 - \frac{2z^2}{(z-1)^2}  - \frac{12z}{z-1}  + 24 \ln |z-1|}{2z^5} + \nonumber \\ & &
 \frac {\frac{z^2}{(\frac{-z}{2}-1)^2} - \frac{z^2}{(\frac{z}{2} - 1)^2}  - \frac{6z}{(\frac{-z}{2} - 1)} -\frac{6z}{(\frac{z}{2} - 1)}  - 12 \ln |\frac{z}{2}+ 1| + 12 \ln |\frac{-z}{2}+1|}{2z^5}\\
& \leq & \tilde{C}z^{-3}
\end{eqnarray*}
for some constant $\tilde{C}$.
By symmetry, the case of $z<1$ is identical, and by direct evaluation, the terms corresponding
to $|z|\leq 1$ are bounded independently of $j$.  Summing the bounds on
(\ref{eq3rdinnersum}) over all $z\in\mathbb{Z}$,
we conclude that (\ref{eq3rdsum}) is bounded independently of $N$ and is $O(\Delta t^2)$.

In summary, there exist constants $C_{11,q}$ and $C_{11,q^2}$, independent of $N$ and
$\Delta t$, such that
\begin{eqnarray*}
\|G_{11}\|_\infty &\leq&  1 +  C_{11,q} \|\tilde{q}\|_{\infty} \Delta t^2
 + C_{11,q^2} \|\tilde{q}\|_{\infty}^2 \Delta t^2.
\end{eqnarray*}
Using the same approach, we find that the matrix $G_{22}$ defined in (\ref{G}), with
the assumption that $p(x)$ is constant, satisfies a bound of the
same form as that of $G_{11}$, with appropriate constant factors. 

Proceeding as in the proof of Lemma \ref{lem32}, we have
\begin{eqnarray*}
\|G_{12}\|_{\infty} 
&\leq&  \max\limits_{1\leq j\leq N} 6\sum_{k\in\hat{I}_N\setminus j} \left|  \hat q(k-j) (\bar{p}j^2+\bar{q})^{-1/2} \Delta t \right| + \\
& & \sum_{k\in\hat{I}_N\setminus j} \left|  \sum_{\omega\in\hat{I}_N\setminus\{k,j\}} |\hat q(j-\omega) \hat q(\omega-k) |(\bar{p}j^2+\bar{q})^{-1/2} \frac{4\Delta t}{\overline{p}\omega^2+\overline{q}} \right|. 
\end{eqnarray*}
These summations can be bounded using the same approach as in the case of $G_{11}$, and therefore
there exist constants $C_{12,q}$ and $C_{12,q^2}$ independent of $N$ and $\Delta t$ such that
\begin{eqnarray*}
\|G_{12}\|_{\infty} &\leq& C_{12,q}  \|\tilde{q}\|_{\infty} \Delta t  + C_{12,q^2} \|\tilde{q}\|_{\infty}^2 \Delta t.
\end{eqnarray*}
The same approach can also be used to show that 
the matrix $G_{21}$ in (\ref{G}), under the assumption that $p(x)$ is constant, 
satisfies a bound of the same form as that of $G_{12}$.

Proceeding as in the proof of Theorem \ref{thm33} yields (\ref{eqthm41}).
\end{proof}

\vspace{0.1in}
\noindent We now present numerical evidence that the conclusion of Theorem \ref{thm33new}
holds under an even weaker assumption about the regularity of $q(x)$.  Table \ref{tabnorms}
shows that for the differential operator (\ref{eqLform}), with $p(x)$ constant and $q(x)$
piecewise constant, $\|S_N(\Delta t)\|_{C_N}$ appears to be bounded independently of $N$.  Unfortunately, such a bound cannot be proved using the same approach as in the proof of Theorem \ref{thm33new}, as the upper bound established is not sufficiently sharp.

\begin{table}[ht]
\caption{$\|S_N(\Delta t)\|_{C_N}$ for various values of $N$ and $\Delta t$, where
$S_N(\Delta t)$, as defined in (\ref{eqSNdef}), is the approximate solution operator for the
KSS method described in Section \ref{secconv} for the problem (\ref{eqnumexpPDE}), (\ref{eqnumexpP}), 
(\ref{eqnumexpQ0}), (\ref{eqnumexpF}), (\ref{eqnumexpG}).}
\label{tabnorms}
\begin{center}
\begin{tabular}{|l|r|r|r|} \hline
$\Delta t$ & $N=256$ & $N=512$ & $N=1024$ \\ \hline
       1 &   1.272444 &  1.272439 &  1.272438 \\
    0.1 &   1.025053 &  1.025047 &  1.025045 \\
  0.01 &   1.002502  & 1.002502 &  1.002501 \\
0.001 &   1.000250 &  1.000250 &  1.000250 \\ \hline
\end{tabular}
\end{center}
\end{table}


\FloatBarrier

\subsection{Comparison with Krylov Solvers}

We will now compare the performance of our KSS method with an implicit time-stepping method,
in which a Krylov subspace method is used to solve systems of linear equations.  We consider
the problem (\ref{eqnumexpPDE}), (\ref{eqnumexpP}), (\ref{eqnumexpQ1}), (\ref{eqnumexpF}), (\ref{eqnumexpG}), with periodic boundary conditions.

After spatial discretization, we solve the system of ODEs (\ref{eqwaveodesys}) using the trapezoidal
rule for time-stepping, which requires solving the systems of linear equations
\begin{equation} \label{eqgmressys}
\left(I_{2N} - \frac{\Delta t}{2} \tilde{L}_N \right){\bf v}_N^{n+1} = 
\left(I_{2N} + \frac{\Delta t}{2} \tilde{L}_N \right){\bf v}_N^{n}, \quad n = 0, 1, 2, \ldots.
\end{equation}
To solve each system, we use GMRES, with ILU(0) preconditioning \cite{ilu}.  The results are
shown in Table \ref{tabprob10k}.  We observe that the trapezoidal rule is not nearly as accurate
as KSS, even when the system (\ref{eqgmressys}) is solved to very high accuracy.
Furthermore, the accuracy deteriorates as the grid size increases, and second-order accuracy
is not maintained.  This is due to the fact that the initial data, and therefore the solution, is not 
smooth; with smooth initial data, the trapezoidal rule is more accurate, and does not lose accuracy
as $N$ increases, though it is still not as accurate as KSS.

Finally, the number of iterates needed by GMRES for convergence, 
though reduced to some extent by the preconditioning, still increases
with both $N$ and $\Delta t$ (whether the initial data is smooth or not), 
while the number of FFTs or matrix-vector multiplications required
by KSS are not influenced by these parameters.  Similar results were obtained
when using BiCGSTAB in place of GMRES, except that, on average, even more iterations
were required for convergence.

\begin{table}[ht]
\caption{Relative errors, execution times in seconds, and average iteration counts in the solution of
(\ref{eqnumexpPDE}), (\ref{eqnumexpP}), (\ref{eqnumexpQ1}), (\ref{eqnumexpF}), (\ref{eqnumexpG})
with periodic boundary conditions
on the domain $(0,2\pi)\times(0,1)$,
using the second-order KSS method described in Section \ref{secconv} (labeled ``KSS'')
and the trapezoidal rule with GMRES and ILU preconditioning (labeled ``GMRES'').
Both methods use $N$ grid points, time step $\Delta t$, and central differencing in space.}
\label{tabprob10k}
\begin{center}
\footnotesize
\begin{tabular}{|c|r|r|r|r|r|r|} \hline
\multicolumn{2}{|c|}{} & \multicolumn{2}{|c|}{KSS} & \multicolumn{3}{|c|}{GMRES} \\ \hline
$N$ & $\Delta t$ & error & time & error & time & iter. \\ \hline
 & $\pi/64$ &  2.313e-04 & 0.001 &  4.685e-02 & 0.009 & 8\\
 & $\pi/128$ &  5.891e-05 & 0.001 &  2.091e-02 & 0.013 & 5\\
256 & $\pi/256$ &  1.417e-05 & 0.002 &  1.080e-02 & 0.024 & 4\\
 & $\pi/512$ &  3.574e-06 & 0.004 &  2.979e-03 & 0.044 & 3\\
 & $\pi/1024$ &  8.958e-07 & 0.008 &  7.153e-04 & 0.087 & 3\\
 & $\pi/2048$ &  2.242e-07 & 0.015 &  1.844e-04 & 0.176 & 3\\ \hline
 & $\pi/64$ &  2.203e-04 & 0.001 &  3.714e-02 & 0.022 & 13\\
 & $\pi/128$ &  6.005e-05 & 0.002 &  2.574e-02 & 0.032 & 9\\
512 & $\pi/256$ &  1.464e-05 & 0.004 &  1.210e-02 & 0.050 & 6\\
 & $\pi/512$ &  3.758e-06 & 0.007 &  4.789e-03 & 0.084 & 4\\
 & $\pi/1024$ &  9.518e-07 & 0.015 &  1.921e-03 & 0.145 & 3\\
 & $\pi/2048$ &  2.393e-07 & 0.029 &  5.675e-04 & 0.292 & 3\\ \hline
 & $\pi/64$ &  2.112e-04 & 0.001 &  4.279e-02 & 0.073 & 22\\
 & $\pi/128$ &  5.917e-05 & 0.002 &  2.155e-02 & 0.112 & 16\\
1024 & $\pi/256$ &  1.533e-05 & 0.004 &  1.508e-02 & 0.177 & 10\\
 & $\pi/512$ &  3.924e-06 & 0.007 &  7.994e-03 & 0.318 & 7\\
 & $\pi/1024$ &  9.764e-07 & 0.015 &  3.316e-03 & 0.585 & 5\\
 & $\pi/2048$ &  2.428e-07 & 0.031 &  1.304e-03 & 1.134 & 4\\ \hline
\end{tabular}
\normalsize
\end{center}
\end{table}

\FloatBarrier

\subsection{Comparison with Exponential Integrators}

Next, we apply our KSS method to a nonlinear problem, and compare its performance
to that of exponential integrators that employ Krylov subspace methods to evaluate matrix function-vector products.  We consider the Klein-Gordon equation \cite{kge}
\begin{equation} \label{eqKGE}
u_{tt} = u_{xx} - u^3, \quad 0 < x < 2\pi, \quad t > 0,
\end{equation}
with initial conditions (\ref{eqnumexpF}), (\ref{eqnumexpG}), and periodic boundary conditions.
The second-order KSS method described in Section \ref{secconv} is compared with the following
methods:
\begin{itemize}
\item A Gautschi-type method presented in \cite{grimm,hochlub}, in which matrix function-vector 
products are computed by applying Lanczos iteration, as described in \cite{hochlubsel}.
This method will be referred to as ``Gautschi-Krylov''.
\item The exponential Euler method \cite{ostermann22}, with matrix function-vector products 
computed using an adaptive Krylov iteration from \cite{adapkrylov}.  This method will be referred
to as ``adaptive Krylov''.
\end{itemize}
The results are shown in Table \ref{tabprob12}.  For Gautschi-Krylov and adaptive Krylov,
the iteration counts reported in the table refer to the average number of matrix-vector multiplications
performed in the course of approximating matrix function-vector products.  For all three methods,
the following computational expense is incurred during each time step:
\begin{itemize}
\item For KSS, three matrix-vector multiplications, three FFTs, and two inverse FFTs, in the course
of approximating four matrix function-vector products, with an $N\times N$ matrix.
\item For Gautschi-Krylov, two matrix function-vector products, each involving, on average,
the number of matrix-vector multiplications reported in Table \ref{tabprob12}, with an
$N\times N$ matrix.
\item For adaptive Krylov, one matrix function-vector product, involving, on average,
the number of matrix-vector multiplications reported in Table \ref{tabprob12}, with a $2N\times 2N$ matrix.
\end{itemize}
As can be seen in the table, the number of matrix function-vector products required by
Gautschi-Krylov and adaptive Krylov increases with $N$ and $\Delta t$.  The accuracy of KSS
and adaptive Krylov is comparable, while Gautschi-Krylov is somewhat more accurate than both,
but KSS is significantly faster than both.
\begin{table}[ht]
\caption{Relative errors, execution times in seconds, and iteration counts in the solution of (\ref{eqKGE}), 
(\ref{eqnumexpF}), (\ref{eqnumexpG}) with periodic boundary conditions on the domain
$(0,2\pi)\times(0,1)$, using the
second-order KSS method described in Section \ref{secconv} (labeled ``KSS''),
the Gautschi-type method from \cite{grimm,hochlub} (labeled ``Gautschi-Krylov''),
and the exponential Euler method \cite{ostermann22} with adaptive Krylov
iteration \cite{adapkrylov} (labeled ``Adaptive Krylov'').  
All methods use $N$ grid points, time step $\Delta t$, 
and central differencing in space.}
\label{tabprob12}
\begin{center}
\footnotesize
\begin{tabular}{|c|r|r|r|r|r|r|r|r|r|} \hline
\multicolumn{2}{|c|}{} & \multicolumn{2}{|c|}{KSS} & \multicolumn{3}{|c|}{Gautschi-Krylov} & \multicolumn{3}{|c|}{Adaptive Krylov} \\ \hline
$N$ & $\Delta t$ & error & time & error & time & iter. & error & time & iter. \\ \hline
 & $\pi/64$ &  5.441e-04 & 0.001 &  1.796e-04 & 0.013 & 8 &  4.967e-04 & 0.037 & 16\\
 & $\pi/128$ &  1.353e-04 & 0.001 &  4.309e-05 & 0.015 & 6 &  1.315e-04 & 0.049 & 11\\
256 & $\pi/256$ &  3.328e-05 & 0.002 &  1.036e-05 & 0.024 & 5 &  3.304e-05 & 0.084 & 8\\
 & $\pi/512$ &  8.430e-06 & 0.004 &  2.533e-06 & 0.033 & 4 &  8.384e-06 & 0.161 & 7\\
 & $\pi/1024$ &  2.105e-06 & 0.008 &  6.346e-07 & 0.055 & 4 &  2.099e-06 & 0.277 & 6\\
 & $\pi/2048$ &  5.258e-07 & 0.016 &  1.588e-07 & 0.095 & 3 &  5.248e-07 & 0.542 & 5\\ \hline
 & $\pi/64$ &  5.221e-04 & 0.001 &  1.616e-04 & 0.025 & 12 &  4.919e-04 & 0.059 & 21\\
 & $\pi/128$ &  1.376e-04 & 0.002 &  4.434e-05 & 0.028 & 9 &  1.307e-04 & 0.074 & 15\\
512 & $\pi/256$ &  3.351e-05 & 0.003 &  1.068e-05 & 0.036 & 7 &  3.285e-05 & 0.109 & 11\\
 & $\pi/512$ &  8.375e-06 & 0.006 &  2.600e-06 & 0.059 & 6 &  8.331e-06 & 0.191 & 8\\
 & $\pi/1024$ &  2.090e-06 & 0.012 &  6.228e-07 & 0.090 & 5 &  2.085e-06 & 0.360 & 7\\
 & $\pi/2048$ &  5.223e-07 & 0.025 &  1.575e-07 & 0.148 & 4 &  5.216e-07 & 0.645 & 6\\ \hline
 & $\pi/64$ &  5.158e-04 & 0.002 &  1.614e-04 & 0.110 & 18 &  4.893e-04 & 0.099 & 35\\
 & $\pi/128$ &  1.348e-04 & 0.004 &  4.103e-05 & 0.073 & 12 &  1.299e-04 & 0.137 & 22\\
1024 & $\pi/256$ &  3.358e-05 & 0.008 &  1.066e-05 & 0.075 & 9 &  3.267e-05 & 0.192 & 15\\
 & $\pi/512$ &  8.423e-06 & 0.015 &  2.663e-06 & 0.089 & 7 &  8.293e-06 & 0.299 & 11\\
 & $\pi/1024$ &  2.087e-06 & 0.031 &  6.429e-07 & 0.138 & 6 &  2.076e-06 & 0.502 & 9\\
 & $\pi/2048$ &  5.201e-07 & 0.063 &  1.512e-07 & 0.228 & 5 &  5.191e-07 & 0.866 & 7\\ \hline
\end{tabular}
\end{center}
\end{table}

Next, we consider another Klein-Gordon equation,
\begin{equation} \label{eqKGEv}
u_{tt} = (p(x)u_x)_x - q(x)u-u^3, \quad 0 < x < 2\pi, \quad t > 0,
\end{equation}
with $p(x)$ from (\ref{eqnumexpP2}), $q(x)$ from (\ref{eqnumexpQ}), 
initial conditions (\ref{eqnumexpF}), (\ref{eqnumexpG}), and periodic boundary conditions.
The results are shown in Table \ref{tabprob14}.  We see that the KSS method cannot produce
an accurate solution when $\Delta t > \Delta x$; the method is unstable in this case, due to
$p(x)$ varying with $x$.  For $\Delta t$ sufficiently small, KSS exhibits second-order accuracy,
and accuracy comparable to that of adaptive Krylov.  Gautschi-Krylov is the most accurate method
of the three, but KSS, when stable, is able to deliver greater accuracy in less time.
\begin{table}[ht]
\caption{Relative errors, execution times in seconds, and iteration counts in the solution of (\ref{eqKGEv}), 
(\ref{eqnumexpF}), (\ref{eqnumexpG}) with periodic boundary conditions on the domain
$(0,2\pi)\times(0,1)$, using the
second-order KSS method described in Section \ref{secconv} (labeled ``KSS''),
the Gautschi-type method from \cite{grimm,hochlub} (labeled ``Gautschi-Krylov''),
and the exponential Euler method \cite{ostermann22} with adaptive Krylov
iteration \cite{adapkrylov} (labeled ``Adaptive Krylov'').  
All methods use $N$ grid points, time step $\Delta t$, 
and central differencing in space.}
\label{tabprob14}
\small
\begin{center}
\footnotesize
\begin{tabular}{|c|r|r|r|r|r|r|r|r|r|} \hline
\multicolumn{2}{|c|}{} & \multicolumn{2}{|c|}{KSS} & \multicolumn{3}{|c|}{Gautschi-Krylov} & \multicolumn{3}{|c|}{Adaptive Krylov} \\ \hline
$N$ & $\Delta t$ & error & time & error & time & iter. & error & time & iter. \\ \hline
 & $\pi/64$ &  -- & -- &  4.011e-04 & 0.008 & 7 &  1.076e-03 & 0.025 & 11\\
 & $\pi/128$ &  2.887e-04 & 0.001 &  1.079e-04 & 0.010 & 5 &  2.817e-04 & 0.042 & 8\\
256 & $\pi/256$ &  7.117e-05 & 0.003 &  2.738e-05 & 0.014 & 4 &  7.031e-05 & 0.071 & 6\\
 & $\pi/512$ &  1.788e-05 & 0.005 &  7.010e-06 & 0.025 & 4 &  1.778e-05 & 0.137 & 5\\
 & $\pi/1024$ &  4.455e-06 & 0.011 &  1.755e-06 & 0.043 & 3 &  4.442e-06 & 0.224 & 4\\
 & $\pi/2048$ &  1.112e-06 & 0.021 &  4.391e-07 & 0.087 & 3 &  1.110e-06 & 0.352 & 3\\ \hline
 & $\pi/64$ &  -- & -- &  4.010e-04 & 0.019 & 10 &  1.076e-03 & 0.051 & 20\\
 & $\pi/128$ &  -- & -- &  1.079e-04 & 0.019 & 7 &  2.816e-04 & 0.055 & 11\\
512 & $\pi/256$ &  7.113e-05 & 0.003 &  2.740e-05 & 0.029 & 6 &  7.027e-05 & 0.093 & 8\\
 & $\pi/512$ &  1.788e-05 & 0.007 &  6.978e-06 & 0.041 & 4 &  1.777e-05 & 0.158 & 6\\
 & $\pi/1024$ &  4.453e-06 & 0.014 &  1.755e-06 & 0.068 & 4 &  4.440e-06 & 0.308 & 5\\
 & $\pi/2048$ &  1.111e-06 & 0.028 &  4.390e-07 & 0.113 & 3 &  1.110e-06 & 0.513 & 4\\ \hline
 & $\pi/64$ &  -- & -- &  4.010e-04 & 0.089 & 15 &  1.075e-03 & 0.100 & 55\\
 & $\pi/128$ &  -- & -- &  1.079e-04 & 0.055 & 11 &  2.815e-04 & 0.120 & 19\\
1024 & $\pi/256$ &  -- & -- &  2.737e-05 & 0.051 & 8 &  7.026e-05 & 0.167 & 12\\
 & $\pi/512$ &  1.787e-05 & 0.012 &  6.981e-06 & 0.068 & 6 &  1.776e-05 & 0.215 & 8\\
 & $\pi/1024$ &  4.452e-06 & 0.025 &  1.751e-06 & 0.104 & 5 &  4.439e-06 & 0.361 & 6\\
 & $\pi/2048$ &  1.111e-06 & 0.052 &  4.389e-07 & 0.160 & 4 &  1.109e-06 & 0.699 & 5\\ \hline
\end{tabular}
\normalsize
\end{center}
\end{table}

Finally, we compare Gautschi-Krylov to a variation of Gautschi-Krylov in which any matrix
function-vector products are computed using KSS; this variation is referred to as ``Gautschi-KSS''.  
As can be seen in Table \ref{tabprob12g},
this variation combines the greater stability
of Gautschi-Krylov with the efficiency and scalability of KSS.  Gautschi-KSS is significantly
faster than KSS alone (and therefore has an even greater advantage in terms of efficiency over Gautschi-Krylov), and is not unstable for $\Delta t > \Delta x$.  For larger
time steps, Gautschi-KSS does not always exhibit second-order accuracy; this is due to the lack
of smoothness in the solution.

\begin{table}[ht]
\caption{Relative errors, execution times in seconds, and iteration counts in the solution of (\ref{eqKGEv}), 
(\ref{eqnumexpF}), (\ref{eqnumexpG}) with periodic boundary conditions on the domain
$(0,2\pi)\times(0,1)$, using the Gautschi-type method from \cite{grimm,hochlub} with
matrix function-vector products computed as in \cite{hochlubsel}
(labeled ``Gautschi-Krylov''), and the Gautschi-type method with matrix function-vector
products computed via KSS (labeled ``Gautschi-KSS'').
All methods use $N$ grid points, time step $\Delta t$, 
and central differencing in space.}
\label{tabprob12g}
\begin{center}
\footnotesize
\begin{tabular}{|c|r|r|r|r|r|r|} \hline
\multicolumn{2}{|c|}{} & \multicolumn{3}{|c|}{Gautschi-Krylov} & \multicolumn{2}{|c|}{Gautschi-KSS} \\ \hline
$N$ & $\Delta t$ & error & time & iter. & error & time \\ \hline
 & $\pi/64$ &   1.796e-04 & 0.013 & 8 &  1.933e-03 & 0.000 \\
 & $\pi/128$ &  4.309e-05 & 0.015 & 6 &  9.822e-05 & 0.001 \\
256 & $\pi/256$ &  1.036e-05 & 0.024 & 5 &  2.158e-05 & 0.001 \\
 & $\pi/512$ &  2.533e-06 & 0.033 & 4 &  5.267e-06 & 0.003 \\
 & $\pi/1024$ &  6.346e-07 & 0.055 & 4 &  1.304e-06 & 0.005 \\
 & $\pi/2048$ &  1.588e-07 & 0.095 & 3 &  3.254e-07 & 0.011 \\ \hline
 & $\pi/64$ &  1.616e-04 & 0.025 & 12 &  1.306e-03 & 0.001\\
 & $\pi/128$ &  4.434e-05 & 0.028 & 9 &  4.793e-04 & 0.001 \\
512 & $\pi/256$ &  1.068e-05 & 0.036 & 7 &  2.320e-05 & 0.002 \\
 & $\pi/512$ &  2.600e-06 & 0.059 & 6 &  5.339e-06 & 0.004 \\
 & $\pi/1024$ &  6.228e-07 & 0.090 & 5 &  1.300e-06 & 0.009 \\
 & $\pi/2048$ &  1.575e-07 & 0.148 & 4 &  3.228e-07 & 0.017 \\ \hline
 & $\pi/64$ &  1.614e-04 & 0.110 & 18 &  1.237e-03 & 0.001 \\
 & $\pi/128$ &  4.103e-05 & 0.073 & 12 &  3.384e-04 & 0.002 \\
1024 & $\pi/256$ &  1.066e-05 & 0.075 & 9 &  1.169e-04 & 0.004 \\
 & $\pi/512$ &  2.663e-06 & 0.089 & 7 &  5.598e-06 & 0.007 \\
 & $\pi/1024$ &  6.429e-07 & 0.138 & 6 &  1.314e-06 & 0.015 \\
 & $\pi/2048$ &  1.512e-07 & 0.228 & 5 &  3.238e-07 & 0.029 \\ \hline
\end{tabular}
\normalsize
\end{center}
\end{table}

\FloatBarrier

\section{Conclusion}

We have established an upper bound for the approximate solution operator of a second-order KSS method applied to the 1-D wave equation with bandlimited coefficients and periodic boundary conditions.
Unfortunately, the bound is not independent of the grid size, which indicates that the unconditional
stability proved for the heat equation for the same kind of spatial differential operator in \cite{paper2}
does not extend to the wave equation.  Numerical experiments support this assertion, while also suggesting that conditional stability may still hold.  Unlike the CFL condition, which relates the 
spatial grid mesh and time step to the {\em magnitude} of the wave speed, a stability condition
for a KSS method would likely relate the spatial grid mesh and time step to some measure of the
{\em variation} in the wave speed.

We have also proved that the same KSS method, applied to the wave equation with periodic boundary conditions, is convergent with spectral accuracy in space and second-order accuracy in time,
as well as unconditionally stable, in the case of a constant wave speed and a bandlimited reaction
term coefficient. This is the first result proving unconditional stability for a KSS method, for the wave
equation, that approximates the solution operator of the PDE using prescribed interpolation points, as opposed to nodes of Gauss quadrature rules.  Numerical experiments suggest that this unconditional stability also holds for related problems.

Furthermore, it has been demonstrated through numerical experiments, and then proved,
that the assumption of a bandlimited reaction term coefficient is not necessary for unconditional stability.
The proof of this result is the first stability analysis of a KSS method that does not require the coefficients of the spatial differential operator of the PDE to be either constant or bandlimited.  Future work will
extend this theory to other problems to which KSS methods have been applied.
Finally, it has been shown that KSS methods can be effective for nonlinear wave equations,
with an advantage in efficiency and scalability over other time-stepping methods that use Krylov subspace iterations, and that it is worthwhile to combine these approaches.  
Ongoing work involves combination of higher-order KSS methods and exponential
integrators \cite{hochost1,hochost2} to improve on the combination presented in \cite{kssepi}.

KSS methods, as presented in this paper and in \cite{paper8}, generalize the advantage of the Fourier spectral method for constant-coefficient linear PDEs--the ability to compute Fourier coefficients independently and with large time steps--to their variable-coefficient counterparts.  Although the discrete Fourier transform has served as an essential ingredient in KSS methods in these works, it is important to note that KSS methods and the DFT are not inextricably linked.  While the focus of this paper is mostly on problems in one space dimension with periodic boundary conditions, the main idea behind KSS methods--component-wise time stepping--can be employed effectively with any orthonormal basis (for example, a basis of modified sines, as used in \cite{nmpde}) for which transformation between physical space and frequency space can be carried out efficiently.  This allows for the development of KSS-like methods that use, for example, bases of Chebyshev polynomials or wavelets.
For problems on non-rectangular domains, combination with fictitious domain methods,
such as the Fourier continuation approach of \cite{bruno}, is worthy of investigation.
Another direction for future work is the addition of local time-stepping \cite{grote}, except
in frequency space rather than physical space, to handle the case of variable wave speed
by using smaller time steps for low-frequency components that are affected the most by such heterogeneity.



\section*{Acknowledgments}

The authors wish to thank the anonymous referees for their helpful feedback, which led to substantial improvement of the manuscript.


\bibliographystyle{plain}

\end{document}